\newtheorem{theorem}{Theorem}[section]
\newtheorem{lemma}[theorem]{Lemma}
\numberwithin{equation}{section}
\def\cI{\mathcal{I}}
\def\({\left(}
\def\){\right)}
\def\r{\mathbb{R}}
\def\rm{\mathbb{R}^m}
\def\sm{\mathbb{S}^m}
\def\n{\mathbb{N}}
\def\dm{\mathrm{\,d}\mu_g}
\def\rh{\rightharpoonup}
\def\im{\int_{M}}
\def\o{\Omega}
\def\cC{\mathcal{C}}
\def\cE{\mathcal{E}}
\def\cH{\mathcal{H}}
\def\cK{\mathcal{K}}
\def\cM{\mathcal{M}}
\def\cN{\mathcal{N}}
\def\cP{\mathcal{P}}
\def\cJ{\mathcal{J}}
\def\cT{\mathcal{T}}
\def\cU{\mathcal{U}}
\def\bf{\mathbf}
\author{M\'onica Clapp\footnote{M. Clapp was partially supported by CONACYT (Mexico) through the grant for project A1-S-10457.} \ and \ Angela Pistoia\footnote{A. Pistoia was partially supported by Fondi di Ateneo ``Sapienza'' Universit\`a di Roma (Italy). }}
\title{Yamabe systems and optimal partitions on manifolds with symmetries}
\date{\today}
\begin{document}
\maketitle

\centerline{\em Dedicated to Norman Dancer on the occasion of his 75th birthday}

\begin{abstract}
We prove the existence of regular optimal $G$-invariant partitions, with an arbitrary number $\ell\geq 2$ of components, for the Yamabe equation on a closed Riemannian manifold $(M,g)$ when $G$ is a compact group of isometries of $M$ with infinite orbits. To this aim, we study a weakly coupled competitive elliptic system of $\ell$ equations, related to the Yamabe equation. We show that this system has a least energy $G$-invariant solution with nontrivial components  and we show that the limit profiles of the its components   separate spatially as the competition parameter goes to $-\infty$, giving rise to an optimal partition. 
 For $\ell=2$ the optimal partition obtained yields a least energy sign-changing $G$-invariant solution to the Yamabe equation with precisely two nodal domains. 
\medskip

\noindent\textsc{Keywords:} Competitive elliptic system, Riemannian manifold, critical nonlinearity, optimal partition, free boundary problem, regularity, Yamabe equation, sign-changing solution. 
\medskip

\noindent\textsc{MSC2020:} 35B38,  35J20, 35J47, 35J60, 35R35, 49K20, 49Q10,  58J05.
\end{abstract}

\section{Introduction}

This paper is concerned with the existence and asymptotic behavior of solutions to the weakly coupled competitive Yamabe system
\begin{equation} \label{eq:s}
\begin{cases}
\mathscr{L}_g u_i:=-\Delta_g u_i + \kappa_mS_gu_i = |u_i|^{2^*-2}u_i + \sum\limits_{\substack{j=1 \\ j\neq i}}^\ell\lambda_{ij}|u_j|^\frac{2^*}{2}|u_i|^\frac{2^*}{2}u_i\quad\text{on }M, \\
u_i\neq 0,\qquad i=1,\ldots,\ell,
\end{cases}
\end{equation}
where $(M,g)$ is a closed Riemannian manifold of dimension $m\geq 3$, $S_g$ is its scalar curvature, $\Delta_g:=\mathrm{div}_g\nabla_g$ is the Laplace-Beltrami operator, $\kappa_m:=\frac{m-2}{4(m-1)}$, $2^*:=\frac{2m}{m-2}$ is the critical Sobolev exponent, and $\lambda_{ij}=\lambda_{ji}<0$. We assume that the quadratic form induced by the conformal Laplacian $\mathscr{L}_g$ is coercive. 

This system was recently studied by Clapp, Pistoia and Tavares \cite{cpt}. Here we complement the results obtained in \cite{cpt} by considering manifolds with symmetries. 

Let $G$ be a compact group of isometries of $M$ and let $Gp:=\{\gamma p:\gamma\in G\}$ denote the $G$-orbit of a point $p\in M$. Recall that a subset $X$ of $M$ is said to be $G$-invariant if $Gp\subset X$ for every $p\in\o$ and a function $u:X\to\r$ is called $G$-invariant if it is constant on every $G$-orbit of $\o$. We shall say that a solution $(u_1,\ldots,u_\ell)$ to the system \eqref{eq:s} is $G$-invariant if every component $u_i$ is $G$-invariant.

We prove the following result.

\begin{theorem}\label{thm:existence}
If $1\leq\dim (Gp)<m$ for every $p\in M$, then the system \eqref{eq:s} has a least energy $G$-invariant solution and infinitely many $G$-invariant solutions.
\end{theorem}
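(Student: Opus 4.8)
The plan is to treat \eqref{eq:s} variationally on the space of $G$-invariant functions, where the symmetry hypothesis restores the compactness that is missing in the critical problem. Since the quadratic form of $\mathscr L_g$ is coercive, $\|u\|^2:=\int_M\big(|\nabla_g u|^2+\kappa_m S_g u^2\big)\dm$ is a norm on $H^1_g(M)$ equivalent to the usual one; let $H$ be the closed subspace of $G$-invariant functions in $H^1_g(M)$ and $\mathbf H:=H^\ell$. The $G$-invariant solutions of \eqref{eq:s} are exactly the critical points with all components nontrivial of the energy
\[
J(\bar u):=\tfrac12\sum_{i=1}^\ell\|u_i\|^2-\tfrac1{2^*}\sum_{i=1}^\ell|u_i|_{2^*}^{2^*}-\tfrac1{2^*}\sum_{\substack{i,j=1\\ i\neq j}}^\ell\lambda_{ij}\int_M|u_i|^{\frac{2^*}2}|u_j|^{\frac{2^*}2}\dm ,
\]
and by the principle of symmetric criticality it suffices to find critical points of $J$ on $\mathbf H$ rather than on $H^1_g(M)^\ell$.

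The hypothesis $1\le\dim(Gp)<m$ enters only through one compactness statement: because every $G$-orbit has positive dimension, $G$-invariant functions descend to the orbit space, whose dimension is at most $m-1$, so $2^*$ is \emph{strictly subcritical} there and the embedding $H\hookrightarrow L^{2^*}(M)$ is \emph{compact} (Hebey--Vaugon), while the bound $\dim(Gp)<m$ guarantees that $H$ is infinite-dimensional. With this compactness, $J$ satisfies the Palais--Smale condition on $\mathbf H$: a Palais--Smale sequence is bounded (immediate from $\lambda_{ij}<0$, which makes the coupling terms favorable), converges weakly and, by the compact embedding, strongly in each $L^{2^*}$, and the system then forces strong convergence in $\mathbf H$.

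To produce a least energy solution I would minimize $J$ over the Nehari set
\[
\mathcal N:=\big\{\bar u\in\mathbf H:\ u_i\neq 0\ \text{and}\ \partial_{u_i}J(\bar u)[u_i]=0\ \text{ for }i=1,\dots,\ell\big\}.
\]
Using $\lambda_{ij}<0$ one checks, as in \cite{cpt}, that every $\bar u$ with all components nontrivial has a unique radial rescaling $(t_1u_1,\dots,t_\ell u_\ell)\in\mathcal N$, that $\mathcal N$ is a natural constraint (a complete $C^1$-manifold on which critical points of $J$ are free critical points), and that $c:=\inf_{\mathcal N}J$ is positive, with $J=\big(\tfrac12-\tfrac1{2^*}\big)\sum_i\|u_i\|^2$ on $\mathcal N$, hence coercive there. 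A minimizing sequence is then bounded; passing to a weak limit $\bar u$ and invoking the compact embedding into $L^{2^*}$, the Sobolev inequality together with the Nehari identities keeps each $|u_i|_{2^*}$ bounded away from $0$ along $\mathcal N$, so $\bar u$ has all components nontrivial, can be rescaled into $\mathcal N$ by factors $\le1$ without increasing the energy, and therefore realizes $c$; being a minimizer on a natural constraint, it is a $G$-invariant solution of \eqref{eq:s} of least energy.

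For infinitely many solutions I would run a $\mathbb Z_2$-equivariant minimax over $\mathcal N$. The antipodal map $\bar u\mapsto-\bar u$ preserves $J$ and $\mathcal N$ and is free on $\mathcal N$ because $0\notin\mathcal N$; moreover $\mathcal N$ has infinite Krasnoselskii genus, since for each $k$, choosing a $k$-dimensional subspace $V\subset H$ and injective linear maps $A_2,\dots,A_\ell\colon V\to H$, the assignment $v\mapsto$ (the Nehari rescaling of $(v,A_2v,\dots,A_\ell v)$) is an odd continuous map from the unit sphere of $V$ into $\mathcal N$. As $J|_{\mathcal N}$ is even, bounded below and satisfies the Palais--Smale condition, the classical symmetric minimax principle based on the genus yields an unbounded sequence of critical values $c_k\to+\infty$, hence infinitely many $G$-invariant solutions of \eqref{eq:s}. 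The conceptual heart, and the main obstacle, is the compactness input, i.e.\ establishing (or correctly invoking) compactness of the $G$-invariant critical Sobolev embedding, which rests precisely on $\dim(Gp)\ge1$; once it is available, the Nehari analysis, the natural-constraint lemma and the genus computation are standard adaptations of the theory of competitive critical systems developed in \cite{cpt}.
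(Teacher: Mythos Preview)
Your overall strategy---exploit the Hebey--Vaugon compact embedding $H^1_g(M)^G\hookrightarrow L^{2^*}_g(M)$ to restore Palais--Smale, minimize on a Nehari-type set for the least energy solution, and run an even genus-based minimax for multiplicity---is exactly the paper's. The compactness discussion and the role of the hypotheses $1\le\dim(Gp)<m$ are correctly identified.

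There is, however, a genuine technical gap. You assert that ``every $\bar u$ with all components nontrivial has a unique radial rescaling $(t_1u_1,\dots,t_\ell u_\ell)\in\mathcal N$'' and that $\mathcal N$ is a $\cC^1$-manifold. For competitive systems this is false: when the components overlap and $|\lambda_{ij}|$ is large, the Nehari system may have no solution in $(0,\infty)^\ell$. For instance, with $\ell=2$, $m=4$ (so $2^*=4$) and $u_1=u_2=u$, the two Nehari identities force $t_1=t_2=:t$ and then $\|u\|_g^2=t^2(1+\lambda_{12})|u|_{4}^4$, which has no positive solution once $\lambda_{12}<-1$. Thus the ``Nehari rescaling'' in your genus construction $v\mapsto(v,A_2v,\dots,A_\ell v)$ need not exist for arbitrary injective linear maps $A_i$, and the natural-constraint/manifold claims are unjustified.

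The paper, following \cite{cs}, handles precisely this point by passing from $\cN^G$ to the open set
\[
\cU^G:=\{\bar u\in(\mathcal S^G)^\ell:\ \bar s\bar u\in\cN^G\ \text{for some }\bar s\in(0,\infty)^\ell\}
\]
and to the reduced functional $\Psi(\bar u):=\cJ(\bar s_{\bar u}\bar u)$. One shows (Lemmas~\ref{lem:U} and \ref{lem:psi}) that the rescaling is unique \emph{when it exists}, that $\Psi\in\cC^1(\cU^G)$ with critical points corresponding bijectively to fully nontrivial critical points of $\cJ$, and that $\Psi(\bar u_n)\to\infty$ whenever $\bar u_n\to\partial\cU^G$, so $\cU^G$ is positively invariant under the negative pseudogradient flow and standard deformation arguments apply. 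The infinite genus of $\cU^G$ is obtained, as in \cite[Lemma~4.5]{cs}, from $G$-invariant functions with pairwise disjoint supports (for which the rescaling trivially exists), not from arbitrary linear maps. With these modifications your outline becomes the paper's proof.
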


To describe the limit profile of least energy $G$-invariant solutions to the system \eqref{eq:s} as $\lambda_{ij}\to-\infty$, we consider the Dirichlet problem
\begin{equation} \label{eq:u}
\begin{cases}
\mathscr{L}_gu = |u|^{2^*-2}u &\text{ in }\Omega,\\
u=0 &\text{ on }\partial \Omega,\\
u\text{ is }G\text{-invariant},
\end{cases}
\end{equation}
in a $G$-invariant open subset $\o$ of $M$, and set
\begin{equation} \label{eq:cOmega}
c_\Omega^G:=\inf\Big\{\frac{1}{m}\im|u|^{2^*}\dm:u\neq 0, \ u\text{ solves }\eqref{eq:u}\Big\}.
\end{equation}
Let $\cP_\ell^G:=\{\{\Omega_1,\ldots,\Omega_\ell\}:\,\Omega_i\neq\emptyset \text{ is a }G\text{-invariant open subset of }M\text{ and }\Omega_i\cap \Omega_j=\emptyset\text{ if }i\neq j \}$. We shall say that $\{\Omega_1,\ldots,\Omega_\ell\}\in\cP_\ell^G$ is \emph{an optimal $(G,\ell)$-partition} for the Yamabe equation
\begin{equation} \label{eq:y}
\mathscr{L}_g u = |u|^{2^*-2}u\qquad\text{on }M,
\end{equation}
if
$$\sum_{i=1}^\ell c_{\Omega_i}^G=\inf_{\{\Theta_1,\ldots,\Theta_\ell\}\in\cP_\ell^G}\;\sum_{i=1}^\ell c_{\Theta_i}^G.$$

The relation between variational elliptic systems having large competitive interaction and optimal partition problems was observed by Conti, Terracini and Verzini in \cite{ctv1,ctv2} and Chang, Lin, Lin and Lin in \cite{clll}, and has been extensively studied since. An ample list of references is given in \cite{cpt}. Our next result describes this relation for the system \eqref{eq:s}.
\begin{theorem} \label{thm:op}
Assume $1\leq\dim (Gp)<m$ for every $p\in M$. Let $\lambda_{n}<0$ be such that $\lambda_{n}\to -\infty$. For each $n\in\n$, let $(u_{n,1},\ldots,u_{n,\ell})$ be a least energy $G$-invariant solution to the system \eqref{eq:s} with $\lambda_{ij}=\lambda_n$ for all $i\neq j$, such that $u_{n,i}>0$ for all $n\in\n$ and $i=1,\ldots\ell$. Then, after passing to a subsequence, we have that
\begin{itemize}
\item[$(i)$]$u_{n,i}\to u_{\infty,i}$ strongly in $H_g^1(M)\cap\cC^{0,\alpha}(M)$ for every $\alpha\in (0,1)$, where  $u_{\infty,i}\geq 0$,\, $u_{\infty,i}\neq 0$,\, and $u_{\infty,i}|_{\Omega_i}$ is a least energy $G$-invariant solution to the problem \eqref{eq:u} in $\Omega_i:=\{p\in M:u_{\infty,i}(p)>0\}$ for each $i=1,\ldots,\ell$. Moreover,
\[
\int_M \lambda_n u_{n,i}^\frac{2^*}{2} u_{n,j}^\frac{2^*}{2}\to 0 \text{ \ as \ } n\to \infty\quad \text{whenever } i\neq j.
\]
\item[$(ii)$] $u_{\infty,i}$ is $G$-invariant and $u_{\infty,i}\in \cC^{0,1}(M)$ for each $i=1,\ldots, \ell$.
\item[$(iii)$]$\{\Omega_1,\ldots,\Omega_\ell\}\in\cP_\ell^G$ and it is an optimal $(G,\ell)$-partition for the Yamabe equation on $(M,g)$. In particular, each $\Omega_i$ is connected.
\item[$(iv)$] $M\smallsetminus\bigcup_{i=1}^\ell\Omega_i=\mathscr R\cup\mathscr S$, where $\mathscr R\cap\mathscr S=\emptyset$, $\mathscr R$ is an  $(m-1)$-dimensional $\cC^{1,\alpha}$-submanifold of $M$ and $\mathscr S$ is a closed subset of $M$ with Hausdorff measure $\leq m-2$. In particular, $M=\cup_{i=1}^\ell \overline \Omega_i$. Moreover, given $p_0\in\mathscr R$ there exist $i\neq j$ such that
\[
\lim_{p\to p_0^+} |\nabla_g u_i(p)|^2=\lim_{p\to p_0^-}  |\nabla_g u_j(p)|^2\neq 0,
\]
where $p\to p_0^\pm$ are the limits taken from opposite sides of $\mathscr R$, and for $p_0\in\mathscr S$ we have
\[
\lim_{p\to p_0}|\nabla_g u_i(p)|^2=0\quad \text{for every } i=1,\ldots, \ell.
\]
\item[$(v)$] If $\ell=2$, then\, $u_{\infty,1}-u_{\infty,2}$\,  is a least energy $G$-invariant sign-changing solution to the Yamabe equation \eqref{eq:y}.
\end{itemize}
\end{theorem}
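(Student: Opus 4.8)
\emph{Proof strategy.} The whole argument rests on one structural fact: although $2^*$ is the critical exponent, it is \emph{subcritical in the $G$-invariant class}, because $1\le\dim(Gp)$ for every $p$ forces the embedding $\im|\cdot|^{2^*}\dm$-controlling inclusion $H^1_g(M)^G\hookrightarrow L^{2^*}_g(M)$ to be compact (the $G$-invariant Sobolev exponent $\tfrac{2(m-d)}{m-d-2}$, with $d:=\min_p\dim(Gp)\ge1$, is strictly larger than $2^*$). I would first record uniform bounds. Writing $c_n^G$ for the least energy level of the system with $\lambda_{ij}=\lambda_n$, the Nehari identity gives $c_n^G=\tfrac1m\sum_i\|u_{n,i}\|^2$; testing the energy functional along $\ell$-tuples of $G$-invariant Dirichlet ground states supported on the pieces of an arbitrary partition in $\cP_\ell^G$ (whose couplings vanish by disjointness) yields $c_n^G\le\inf_{\{\Theta_i\}\in\cP_\ell^G}\sum_i c_{\Theta_i}^G=:c_\infty$, so $\|u_{n,i}\|$ is bounded above; it is bounded below away from $0$ by the Sobolev inequality and the Nehari identity (which, using $\lambda_n<0$, give $\|u_{n,i}\|^2\le|u_{n,i}|_{2^*}^{2^*}$, hence $|u_{n,i}|_{2^*}\ge c_0>0$). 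Consequently $-\lambda_n\sum_{i\ne j}\im u_{n,i}^{2^*/2}u_{n,j}^{2^*/2}\dm$ stays bounded. Passing to a subsequence, $u_{n,i}\rh u_{\infty,i}$ in $H_g^1(M)$ and, by compactness, $u_{n,i}\to u_{\infty,i}$ in $L^{2^*}_g(M)$, so $\im u_{n,i}^{2^*/2}u_{n,j}^{2^*/2}\dm\to\im u_{\infty,i}^{2^*/2}u_{\infty,j}^{2^*/2}\dm$; since $-\lambda_n\to\infty$ while this product is bounded, the limit vanishes, i.e.\ $u_{\infty,i}u_{\infty,j}=0$ a.e.\ for $i\ne j$ (spatial separation).

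Next I would bring in the a priori estimates and the strong-competition analysis of \cite{cpt}. These are local, hence apply verbatim to $G$-invariant solutions, and because $2^*$ is $G$-subcritical the Brezis--Kato/De Giorgi iteration closes, giving $\sup_n\|u_{n,i}\|_{\cC^{0,\alpha'}(M)}<\infty$ for some $\alpha'\in(0,1)$ and thus, up to a subsequence and Arzel\`a--Ascoli, $u_{n,i}\to u_{\infty,i}$ in $\cC^{0,\alpha}(M)$ for every $\alpha\in(0,1)$. Together with the locally uniform decay of $u_{n,j}$ on compact subsets of $\Omega_i:=\{p:u_{\infty,i}(p)>0\}$ this yields $\im\lambda_n u_{n,i}^{2^*/2}u_{n,j}^{2^*/2}\dm\to0$, the strong convergence $u_{n,i}\to u_{\infty,i}$ in $H^1_g(M)$, and, passing to the limit in the weak formulation against $G$-invariant test functions supported in $\Omega_i$ (where the coupling term dies by the local decay), that $u_{\infty,i}\ge0$, $u_{\infty,i}\ne0$ (from the lower bound on $|u_{n,i}|_{2^*}$), is $G$-invariant (the $G$-invariance constraint is closed), and solves $\mathscr{L}_g u_{\infty,i}=u_{\infty,i}^{2^*-1}$ in $\Omega_i$ with $u_{\infty,i}=0$ on $\partial\Omega_i$. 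This gives $(i)$ and the $G$-invariance in $(ii)$; the optimal Lipschitz regularity $u_{\infty,i}\in\cC^{0,1}(M)$ is the usual regularity for limits of competing species and follows, as in \cite{cpt}, from each $u_{\infty,i}$ being a nonnegative subsolution of $\mathscr L_g v=v^{2^*-1}$ on $M$ and from an Almgren-type monotonicity at free-boundary points.

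The energy identity then closes $(iii)$. By strong $H^1$ convergence $c_n^G=\tfrac1m\sum_i\|u_{n,i}\|^2\to\tfrac1m\sum_i\|u_{\infty,i}\|^2$, and $\tfrac1m\|u_{\infty,i}\|^2=\tfrac1m\im|u_{\infty,i}|^{2^*}\dm\ge c_{\Omega_i}^G$ since $u_{\infty,i}$ solves \eqref{eq:u} on $\Omega_i$; as $\{\Omega_1,\dots,\Omega_\ell\}\in\cP_\ell^G$, comparison with $c_n^G\le c_\infty$ forces $c_n^G\to c_\infty$, $\sum_i c_{\Omega_i}^G=c_\infty$ (optimal $(G,\ell)$-partition), and $c_{\Omega_i}^G=\tfrac1m\|u_{\infty,i}\|^2$ for each $i$, i.e.\ $u_{\infty,i}|_{\Omega_i}$ is a least energy $G$-invariant solution of \eqref{eq:u}; connectedness of $\Omega_i$ follows because a Dirichlet ground state on a disconnected $G$-invariant set is supported on a single component (additivity of the energy over components), which would contradict $\Omega_i=\{u_{\infty,i}>0\}$. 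Part $(iv)$ is the regularity theory for optimal partitions (Caffarelli--Lin, Tavares--Terracini, and \cite{cpt}): a reflection/blow-up analysis at free-boundary points splits $M\smallsetminus\bigcup_i\Omega_i$ into the portion $\mathscr R$ where exactly two phases meet transversally, a $\cC^{1,\alpha}$ hypersurface on which the one-sided gradients match through the domain-variation (Pohozaev) identity for the optimal partition, and the remainder $\mathscr S$ of higher multiplicity or tangential contact, of Hausdorff dimension $\le m-2$, where all gradients vanish; again these steps are local and unaffected by restricting to $G$-invariant configurations.

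Finally, for $(v)$ with $\ell=2$, put $w:=u_{\infty,1}-u_{\infty,2}$. It is $G$-invariant, equals $u_{\infty,1}>0$ on $\Omega_1$ and $-u_{\infty,2}<0$ on $\Omega_2$, solves \eqref{eq:y} on $\Omega_1\cup\Omega_2$, is $\cC^1$ across $\mathscr R$ by the gradient-matching in $(iv)$, and extends as a weak solution across $\mathscr S$ because $\cH^{m-1}(\mathscr S)=0$; hence $w$ solves \eqref{eq:y}, changes sign, and has exactly the two nodal domains $\Omega_1,\Omega_2$, with energy $\tfrac1m\im|w|^{2^*}\dm=c_{\Omega_1}^G+c_{\Omega_2}^G=c_\infty$. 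Conversely, for any sign-changing $G$-invariant solution $v$ of \eqref{eq:y}, $v^+|_{\{v>0\}}$ and $v^-|_{\{v<0\}}$ solve \eqref{eq:u} on the $(G,2)$-partition $\{\{v>0\},\{v<0\}\}$, so its energy equals $\tfrac1m(\|v^+\|^2+\|v^-\|^2)\ge c_{\{v>0\}}^G+c_{\{v<0\}}^G\ge c_\infty$; thus $w$ is a least energy $G$-invariant sign-changing solution. I expect the main obstacle to be the strong-competition core — the uniform $\cC^{0,\alpha}$ bounds and the locally uniform decay of subordinate components that turn the weak limits into a genuine optimal partition, together with the free-boundary regularity of $(iv)$ — all of which are carried by the machinery of \cite{cpt} (Almgren monotonicity, blow-up classification, Pohozaev identities); the only point genuinely specific to this paper is checking that this (local) machinery is compatible with the $G$-symmetry, while the symmetry itself is used softly, only through the compactness of $H^1_g(M)^G\hookrightarrow L^{2^*}_g(M)$.
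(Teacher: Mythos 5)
Your overall route coincides with the paper's: compactness of $H^1_g(M)^G\hookrightarrow L^{2^*}_g(M)$ (Hebey--Vaugon) replaces any geometric assumption, the least energy level is squeezed between the segregated/partition level and the weak limit, the uniform H\"older bounds and the free--boundary regularity are quoted from \cite{cpt} (Lemmas 4.3--4.4 and Theorem C.1, which are local and insensitive to the symmetry), and item $(v)$ is obtained by comparing on the sign-changing Nehari set exactly as in the paper (via \cite{ccn}). Two remarks on where you deviate. First, you compare with $c_\infty:=\inf_{\{\Theta_i\}}\sum_i c_{\Theta_i}^G$ directly, while the paper compares with the level $c_\ell^G$ of the segregated set $\cM_\ell^G$ (tuples with $v_iv_j=0$ a.e.\ and $\|v_i\|_g^2=\int_M|v_i|^{2^*}$); the two levels coincide a posteriori, and your variant works, but the paper's choice lets the squeezing be done before one knows that the supports of the limits are open, and it also drives the proof that $u_{\infty,i}|_{\Omega_i}$ is a ground state on $\Omega_i$ and that $\Omega_i$ is connected by a clean exchange argument inside $\cM_\ell^G$. (In either version one should note, as both you and the paper leave implicit, that $u_{\infty,i}\in H^1_{g,0}(\Omega_i)^G$, which follows from the continuity of $u_{\infty,i}$.)

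The one step whose justification as written does not hold up is your claim that the uniform H\"older bounds \emph{together with the locally uniform decay of $u_{n,j}$ on compact subsets of $\Omega_i$} yield $\int_M\lambda_n u_{n,i}^{2^*/2}u_{n,j}^{2^*/2}\to0$, the strong $H^1_g$ convergence, and the passage to the limit in the weak formulation on $\Omega_i$. Uniform smallness of $u_{n,j}$ on a compact subset of $\Omega_i$ does not compensate the factor $|\lambda_n|\to\infty$, and it says nothing about the neighborhood of the free boundary, where neither component is small; so by itself it controls neither the coupling integral nor the coupling term tested against $\varphi\in\cC^\infty_c(\Omega_i)$. The paper obtains these facts in the opposite order and purely variationally: from $\|u_{\infty,i}\|_g^2\le\int_M|u_{\infty,i}|^{2^*}$ one rescales by $t_i\in(0,1]$ to produce a competitor in $\cM_\ell^G$, and the chain $c_\ell^G\le\frac1m\sum_i\|t_iu_{\infty,i}\|_g^2\le\frac1m\liminf_n\sum_i\|u_{n,i}\|_g^2\le c_\ell^G$ forces $t_i=1$ and strong $H^1_g$ convergence; then passing to the limit in the summed Nehari identities, using that each coupling term has a fixed sign, gives $\lambda_n\int_M u_{n,i}^{2^*/2}u_{n,j}^{2^*/2}\to0$, and the equation on $\Omega_i$ comes with the quoted results of \cite{cpt}. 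Your own energy-comparison paragraph can be rerun with weak lower semicontinuity to repair this (it forces norm convergence without assuming it), so the gap is fixable with tools you already have on the page, but as stated the implication ``local decay $\Rightarrow$ vanishing coupling and strong convergence'' is not a proof.
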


In the nonsymmetric case, the existence of a solution to the system \eqref{eq:s} was recently shown in \cite[Theorem 1.2]{cpt} under the additional assumptions that $(M,g)$ has either dimension $3$ and is not conformal to the standard $3$-sphere, or $(M,g)$ is not locally conformally flat and $\dim M\geq 9$. A nonsymmetric version of Theorem \ref{thm:op} was also established in that work, provided $(M,g)$ is not locally conformally flat, $\dim M\geq 10$, and satisfies a further geometric condition whenever $\dim M=10$; see \cite[Theorem 1.2]{cpt}. 

The existence of a least energy sign-changing solution to the Yamabe equation \eqref{eq:y} was established by Ammann and Humbert in \cite{ah} for manifolds of dimension $\geq 11$ that are not locally conformally flat, without any symmetry assumption, whereas existence of infinitely many $G$-invariant sign-changing solutions to this equation was shown by Clapp and Fernández \cite{cf} for $G$ as above.

An immediate consequence of Theorems \ref{thm:existence} and \ref{thm:op} is the following result.

\begin{theorem}
If $1\leq\dim (Gp)<m$ for every $p\in M$, then for every $\ell\geq 2$ there exists an optimal $(G,\ell)$-partition $\{\Omega_1,\ldots,\Omega_\ell\}$ for the Yamabe equation on $(M,g)$ with the properties stated in items $(iii)$ and $(iv)$ of Theorem \ref{thm:op}.
\end{theorem}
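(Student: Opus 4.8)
The plan is to read the statement off directly from Theorems~\ref{thm:existence} and~\ref{thm:op}; the only thing to check is that their hypotheses are compatible. Fix $\ell\geq 2$ and choose a sequence $\lambda_n<0$ with $\lambda_n\to-\infty$. For each $n\in\n$ the choice $\lambda_{ij}=\lambda_n$ for all $i\neq j$ is a special case of the standing assumptions ($\lambda_{ij}=\lambda_{ji}<0$ and $\mathscr L_g$ coercive), so Theorem~\ref{thm:existence} supplies a least energy $G$-invariant solution $(u_{n,1},\ldots,u_{n,\ell})$ of the system~\eqref{eq:s} with these parameters.

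The only delicate point is that Theorem~\ref{thm:op} is stated for solutions with \emph{positive} components, whereas Theorem~\ref{thm:existence} only yields a least energy $G$-invariant solution; I would bridge this gap in the standard way. Since the right-hand side of~\eqref{eq:s} depends on each component only through its absolute value, the map $(u_1,\ldots,u_\ell)\mapsto(|u_1|,\ldots,|u_\ell|)$ leaves both the energy and the Nehari-type set over which the least energy solution is minimized (in the proof of Theorem~\ref{thm:existence}) invariant, so $(|u_{n,1}|,\ldots,|u_{n,\ell}|)$ is again a least energy $G$-invariant solution. By the regularity theory for~\eqref{eq:s}, each $|u_{n,i}|$ is a nonnegative, not identically zero, classical solution of the scalar equation $-\Delta_g|u_{n,i}|+V_{n,i}\,|u_{n,i}|=0$, where $V_{n,i}:=\kappa_m S_g-|u_{n,i}|^{2^*-2}-\sum_{j\neq i}\lambda_n|u_{n,j}|^{2^*/2}|u_{n,i}|^{2^*/2}\in L^\infty(M)$, and hence the strong maximum principle (equivalently, the Harnack inequality for $-\Delta_g+V_{n,i}$ with bounded potential) forces $|u_{n,i}|>0$ on $M$. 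Thus we may assume $u_{n,i}>0$ for all $n$ and $i$.

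It then suffices to apply Theorem~\ref{thm:op} to the sequence $(u_{n,1},\ldots,u_{n,\ell})$. After passing to a subsequence it produces limit profiles $u_{\infty,i}\geq 0$, $u_{\infty,i}\neq 0$, and the $G$-invariant open sets $\Omega_i:=\{p\in M:u_{\infty,i}(p)>0\}$; item~$(iii)$ says precisely that $\{\Omega_1,\ldots,\Omega_\ell\}\in\cP_\ell^G$ is an optimal $(G,\ell)$-partition for the Yamabe equation with each $\Omega_i$ connected, and item~$(iv)$ gives the decomposition $M\smallsetminus\bigcup_{i=1}^\ell\Omega_i=\mathscr R\cup\mathscr S$ with $\mathscr R$ an $(m-1)$-dimensional $\cC^{1,\alpha}$-submanifold, $\mathscr S$ of Hausdorff measure $\leq m-2$, the equality $M=\bigcup_{i=1}^\ell\overline{\Omega_i}$, and the matching and vanishing behavior of $|\nabla_g u_i|^2$ along the free boundary. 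These are exactly the asserted properties, which completes the argument. All the substance being contained in the two theorems already established, the main (and essentially only) obstacle is the elementary reduction to strictly positive components carried out above.
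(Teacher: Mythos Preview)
Your overall approach is exactly the paper's: the theorem is stated there as an immediate consequence of Theorems~\ref{thm:existence} and~\ref{thm:op}, with no further proof given, and you correctly identify that the only thing to supply is the passage to positive components.

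There is, however, a technical slip in your positivity argument. The potential you write down,
\[
V_{n,i}=\kappa_m S_g-|u_{n,i}|^{2^*-2}-\sum_{j\neq i}\lambda_n|u_{n,j}|^{\frac{2^*}{2}}|u_{n,i}|^{\frac{2^*}{2}-2},
\]
is \emph{not} in $L^\infty(M)$ when $m\geq 5$: then $\tfrac{2^*}{2}-2=\tfrac{4-m}{m-2}<0$, so the last term blows up at any zero of $|u_{n,i}|$, which is precisely what you are trying to exclude. Thus the strong maximum principle with bounded potential does not apply as stated, and since the competing term $\lambda_n\sum_{j\neq i}|u_{n,j}|^{2^*/2}|u_{n,i}|^{2^*/2-1}$ is nonpositive, the inequality you get goes the wrong way for a direct Hopf-type argument. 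The cleanest fix is to bypass strict positivity altogether: inspecting the proof of Theorem~\ref{thm:op}, the hypothesis $u_{n,i}>0$ is used only to pass to the limit and obtain $u_{\infty,i}\geq 0$, and for this $u_{n,i}\geq 0$ already suffices. Since taking absolute values gives a nonnegative least energy $G$-invariant solution (your steps (1)--(2) are fine), the conclusions of Theorem~\ref{thm:op} follow and the argument is complete.
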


When $M$ is the standard sphere $\sm$ and $G$ is suitably chosen, the optimal partition admits a more accurate description as shown by Clapp and Pistoia \cite{cp} and Clapp,   Saldaña and Szulkin \cite{css} . We shall give an account of known results for the standard sphere in Section \ref{sec:sphere} of this paper. Section \ref{sec:existence} is devoted to the proof of Theorem \ref{thm:existence} and Section \ref{sec:op} to that of Theorem \ref{thm:op}.

\section{Existence of multiple solutions}
\label{sec:existence}

Let $H^1_g(M)$ be the Sobolev space of square integrable functions on $M$ having square integrable first weak derivatives. We write $\langle\,\cdot\,,\,\cdot\,\rangle$ for the Riemannian metric in $(M,g)$ and denote the induced norm by $|\,\cdot\,|$. For $u,v\in H^1_g(M)$ let
$$\langle u,v\rangle_g:=\im\left(\langle\nabla_g u,\nabla_g v\rangle + \kappa_m S_g uv\right)\dm \qquad\text{and}\qquad \|u\|_g:=\sqrt{\langle u,u\rangle_g},$$
where $\nabla_g$ is the weak gradient. Since we are assuming that the conformal Laplacian $\mathscr{L}_g$ is coercive, $\|\cdot\|_g$ is a norm in $H^1_g(M)$, equivalent to the standard one.

Let $G$ be a closed subgroup of the group of isometries of $M$ and set
$$H^1_g(M)^G:=\{u\in H^1_g(M):u\text{ is }G\text{-invariant}\}.$$
If $\dim(Gp)<m$ for $p\in M$ then, for any given $k\in\n$, we may choose $u_1,\ldots u_k\in H^1_g(M)^G$ having pairwise disjoint supports. Hence, $H^1_g(M)^G$ has infinite dimension. The following result will play a crucial role in the proof of Theorem \ref{thm:existence}.

\begin{lemma} \label{lem:hv}
If $\dim(Gp)\geq 1$ for every $p\in M$, then the embedding $H^1_g(M)^G\hookrightarrow L_g^{2^*}(M)$ is compact.
\end{lemma}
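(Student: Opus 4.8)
The plan is to exploit the fact that $G$-invariant functions, when $G$ has orbits of positive dimension everywhere, enjoy an improved Sobolev embedding because they are, roughly speaking, functions of fewer variables near each point. First I would set up local coordinates: for each $p \in M$, the orbit $Gp$ is a compact submanifold of dimension $d_p \geq 1$, and a $G$-invariant function is constant along $Gp$. Using a slice theorem (the tubular neighborhood theorem for the $G$-action), one obtains a $G$-invariant neighborhood $U_p$ of the orbit diffeomorphic (equivariantly) to a bundle over $Gp$ whose fibers are slices; a $G$-invariant function on $U_p$ is determined by its restriction to a slice, so locally it behaves like a function on a manifold of dimension $m - d_p \leq m-1$. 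On such a lower-dimensional domain the critical exponent is $\frac{2(m-d_p)}{m-d_p-2} > \frac{2m}{m-2} = 2^*$ (or the embedding into every $L^q$ is compact when $m-d_p \leq 2$), so the embedding $H^1 \hookrightarrow L^{2^*}$ is subcritical and hence compact on the slice.

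The key steps, in order, would be: \emph{(1)} Cover $M$ by finitely many such $G$-invariant tubular neighborhoods $U_1,\ldots,U_N$ and fix a smooth $G$-invariant partition of unity $\{\rho_k\}$ subordinate to it (averaging an ordinary partition of unity over $G$ with its Haar measure preserves subordination and smoothness). \emph{(2)} For a $G$-invariant function $u$, write $u = \sum_k \rho_k u$ and estimate $\|\rho_k u\|_{L^{2^*}}$ by passing to slice coordinates in $U_k$: there the $L^{2^*}(M)$-norm over $U_k$ is controlled, via Fubini over the orbit directions (which contribute only a bounded factor since the function is constant there and the orbit has finite volume), by an $L^{2^*}$-norm on the slice $S_k$ of dimension $\le m-1$, which in turn is dominated by the $H^1(S_k)$-norm and hence by $\|u\|_{H^1_g(M)}$. \emph{(3)} Conclude boundedness of the embedding $H^1_g(M)^G \hookrightarrow L^{q}_g(M)$ for some $q > 2^*$. \emph{(4)} Upgrade boundedness into a subcritical exponent to compactness into $L^{2^*}$ by interpolation: if $u_n \rightharpoonup u$ weakly in $H^1_g(M)^G$, then $(u_n)$ is bounded in $L^q_g(M)$ for this $q>2^*$ and converges in $L^2_g(M)$ by the (compact) standard Rellich embedding, so by interpolation $\|u_n - u\|_{L^{2^*}} \le \|u_n-u\|_{L^2}^{\theta}\|u_n-u\|_{L^q}^{1-\theta} \to 0$.

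The main obstacle I anticipate is the slice-coordinate estimate in step (2): the $G$-action need not be free, so the orbit dimension $d_p$ may jump and the orbit space is in general only a stratified set, not a manifold. One must handle points with different isotropy types, and near a small orbit (large isotropy) the tube is a twisted bundle, so writing the $L^{2^*}$ integral as an honest Fubini product over (orbit) $\times$ (slice) requires care with the twisting and with the fact that the ``fiber volume'' of orbits varies. The clean way around this is to use only the \emph{lower bound} $d_p \geq 1$: it suffices to find, for each $p$, one direction along which $u$ is constant, reduce the local Sobolev embedding to dimension $m-1$ (for which $2^* $ is strictly subcritical), and patch with the $G$-invariant partition of unity; one never needs the full slice decomposition, only that the effective local dimension drops by at least one. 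A second, minor technical point is ensuring the constants in the local Sobolev inequalities are uniform over the finite cover, which is automatic by compactness of $M$ and smoothness of the metric. Modulo these points the argument is standard, in the spirit of the symmetric criticality / Hebey–Vaugon type compactness results; indeed the lemma's name suggests this is essentially known, so the proof will mostly consist of assembling these ingredients carefully in the Riemannian setting.
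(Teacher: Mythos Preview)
Your sketch is correct and is precisely the Hebey--Vaugon argument; the paper itself does not give a proof but simply cites \cite[Corollary 1]{hv}, so you have reconstructed the content behind that reference. The only remark is that your step~(2) already yields local \emph{compactness} (not just boundedness) of $H^1\hookrightarrow L^{2^*}$ on each tube, so the interpolation in step~(4) is optional; but the route via boundedness into some $L^q$ with $q>2^*$ followed by Rellich and interpolation is clean and avoids tracking compactness through the partition of unity.
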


\begin{proof}
See \cite[Corollary 1]{hv}.
\end{proof}

We assume from now on that $1\leq\dim(Gp)< m$ for every $p\in M$. Set $\cH^G:=(H^1_g(M)^G)^\ell$ and let $\cJ:\cH^G\to\r$ be given by 
\begin{align} \label{eq:J}
\cJ(u_1,\ldots,u_\ell) :=& \frac{1}{2}\sum_{i=1}^\ell\|u_i\|_g^2 - \frac{1}{2^*}\sum_{i=1}^\ell\im|u_i|^{2^*}\dm - \frac{1}{2^*}\mathop{\sum_{i,j=1}^\ell}_{j\neq i}\im\lambda_{ij}|u_j|^\frac{2^*}{2}|u_i|^\frac{2^*}{2}\dm.
\end{align}
This functional is of class $\cC^1$ and its partial derivatives are
\begin{align*}
\partial_i\cJ(u_1,\ldots,u_\ell)v=&\,\langle u_i,v\rangle_g - \im|u_i|^{2^*-2}u_iv \dm- \mathop{\sum_{j=1}^\ell}_{j\neq i}\im\lambda_{ij}|u_j|^\frac{2^*}{2}|u_i|^{\frac{2^*}{2}-2}u_iv\dm.
\end{align*}
So, by the principle of symmetric criticality \cite{p}, the critical points of $\mathcal{J}:\cH^G\to\r$ are the $G$-invariant solutions to the system
$$\mathscr{L}_g u_i=|u_i|^{2^*-2}u_i + \sum\limits_{\substack{j=1 \\ j\neq i}}^\ell\lambda_{ij}|u_j|^\frac{2^*}{2}|u_i|^\frac{2^*}{2}u_i\quad\text{on }M,\qquad i=1,\ldots,\ell.$$
We are interested in solutions $(u_1,\ldots,u_\ell)$ such that every $u_i$ is nontrivial. They belong to the set
\begin{equation} \label{eq:N}
\cN^G := \{(u_1,\ldots,u_\ell)\in\cH^G:u_i\neq 0, \;\partial_i\cJ(u_1,\ldots,u_\ell)u_i=0, \; \forall i=1,\ldots,\ell\}.
\end{equation}
Note that $\cJ(u_1,\ldots,u_\ell)=\frac{1}{m}\sum_{i=1}^\ell\|u_i\|_g^2$ \ if $(u_1,\ldots,u_\ell)\in \cN^G$.

\begin{lemma} \label{lem:nehari}
There exists $d_0>0$ such that $\|u_i\|_g^2\geq d_0$ for every $(u_1,\ldots,u_\ell)\in\cN^G$ and $i=1,\ldots,\ell$. Therefore, $\cN^G$ is a closed subset of $\cH^G$ and \ $\inf_{\cN^G}\cJ>0$.
\end{lemma}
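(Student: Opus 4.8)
This is the standard Nehari-manifold lower bound, so I would proceed by exploiting the Nehari constraints and the coercivity/compactness already available. Suppose $(u_1,\dots,u_\ell)\in\cN^G$. Fix $i$. The constraint $\partial_i\cJ(u_1,\dots,u_\ell)u_i=0$ reads
\[
\|u_i\|_g^2 = \im|u_i|^{2^*}\dm + \mathop{\sum_{j\neq i}}\lambda_{ij}\im|u_j|^{\frac{2^*}{2}}|u_i|^{\frac{2^*}{2}}\dm.
\]
Since every $\lambda_{ij}<0$, the coupling terms are $\leq 0$, hence
\[
\|u_i\|_g^2 \leq \im|u_i|^{2^*}\dm.
\]
This is the key inequality: the competitive sign of the interaction makes it disappear from the estimate, reducing everything to the single-equation case.

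**Main step.** Now I would invoke the Sobolev embedding $H^1_g(M)\hookrightarrow L^{2^*}_g(M)$ — in fact the $G$-invariant compact embedding of Lemma~\ref{lem:hv} suffices, but even the non-compact classical one gives a continuous embedding constant $S>0$ with $\left(\im|u_i|^{2^*}\dm\right)^{2/2^*}\leq S^{-1}\|u_i\|_g^2$ (using that $\|\cdot\|_g$ is equivalent to the standard norm by coercivity). Combining with the displayed inequality above,
\[
\|u_i\|_g^2 \leq \im|u_i|^{2^*}\dm \leq \left(S^{-1}\|u_i\|_g^2\right)^{\frac{2^*}{2}} = S^{-\frac{2^*}{2}}\|u_i\|_g^{2^*}.
\]
Since $u_i\neq 0$ forces $\|u_i\|_g>0$, I may divide by $\|u_i\|_g^2$ to get $1\leq S^{-2^*/2}\|u_i\|_g^{2^*-2}$, i.e. $\|u_i\|_g^{2^*-2}\geq S^{2^*/2}$, hence
\[
\|u_i\|_g^2 \geq S^{\frac{2^*}{2^*-2}} =: d_0 > 0,
\]
uniformly in $i$ and in the point of $\cN^G$.

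**Closedness and the infimum.** For closedness of $\cN^G$ in $\cH^G$: the defining conditions $\partial_i\cJ(\mathbf u)u_i=0$ are continuous in $\mathbf u$ (as $\cJ\in\cC^1$ and the pairing is continuous), so they cut out a closed set; the only issue is the open condition $u_i\neq 0$, but the uniform bound $\|u_i\|_g^2\geq d_0$ just established shows that any limit point of $\cN^G$ still has all components of norm $\geq d_0>0$, hence nontrivial — so $\cN^G$ is closed. Finally, since $\cJ(\mathbf u)=\frac1m\sum_{i=1}^\ell\|u_i\|_g^2$ on $\cN^G$, we get $\cJ(\mathbf u)\geq \frac{\ell}{m}d_0>0$ for all $\mathbf u\in\cN^G$, so $\inf_{\cN^G}\cJ\geq\frac{\ell}{m}d_0>0$.

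**Expected obstacle.** There is essentially no hard step here; the only thing to be careful about is keeping track of the correct Sobolev constant (continuous vs. optimal, with respect to $\|\cdot\|_g$ vs. the standard norm) so that $d_0$ is genuinely independent of which component and which point of $\cN^G$ one picks — but since coercivity gives a uniform norm equivalence, this is routine. The role of the competitive sign $\lambda_{ij}<0$ is what makes the argument as clean as in the scalar case; without it one would need to control the coupling terms, but here they only help.
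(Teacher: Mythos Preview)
Your argument is correct and is exactly the paper's approach: drop the nonpositive coupling terms from the Nehari identity to get $\|u_i\|_g^2\le\im|u_i|^{2^*}\dm$, then apply the Sobolev embedding to obtain $\|u_i\|_g^2\le C\|u_i\|_g^{2^*}$ and conclude. The paper simply states this in one line and leaves the routine consequences (closedness, positivity of the infimum) to the reader, whereas you have spelled them out.
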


\begin{proof}
As $\lambda_{ij}<0$, the Sobolev embedding yields a positive constant $C$ such that
\begin{align*}
\|u_i\|_g^2\leq \im|u_i|^{2^*}\dm\leq C\|u_i\|_g^{2^*}\quad \text{ for every \ }(u_1,\ldots,u_\ell)\in \cN^G, \ i=1,\ldots,\ell,
\end{align*}
and our claims follow.
\end{proof}
A solution $\bar u\in\cN^G$ to the system \eqref{eq:s} satisfying $\cJ(\bar u)=\inf_{\cN^G}\cJ$ is called a \emph{least energy $G$-invariant solution}.

The variational approach introduced in \cite{cs} can be immediately adapted to establish the existence of infinitely many fully nontrivial critical points of $\cJ$. We sketch this procedure.

Given $\bar{u}=(u_1,\ldots,u_\ell)$ and $\bar{s}=(s_1,\ldots,s_\ell)\in(0,\infty)^\ell$, we write $\bar{s}\bar{u}:= (s_1u_1,\ldots,s_\ell u_\ell)$. Let $\mathcal S^G:=\{u\in H_g^1(M)^G:\|u\|_g=1\}$, \ $\cT^G:=(\mathcal S^G)^\ell$, \ and
$$\cU^G:=\{\bar{u}\in\cT^G:\bar{s}\bar{u}\in\cN^G\text{ \ for some \ }\bar s\in(0,\infty)^\ell\}.$$

\begin{lemma} \label{lem:U}
\begin{itemize}
\item[$(i)$] Let $\bar u\in\cT^G$. If there exists $\bar s_{\bar u}\in(0,\infty)^\ell$ such that $\bar s_{\bar u}\bar u\in\cN^G$, then $\bar s_{\bar u}$ is unique and satisfies
$$\cJ(\bar s_{\bar u}\bar u)=\max_{\bar s\in(0,\infty)^\ell}\cJ(\bar s\bar u).$$
\item[$(ii)$] $\cU^G$ is a nonempty open subset of the Hilbert manifold $\cT^G$.
\item[$(iii)$] The map $\cU^G\to \cN^G$ given by $\bar u\mapsto\bar s_{\bar u}\bar u$ is a homeomorphism.
\end{itemize}
\end{lemma}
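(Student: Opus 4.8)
The plan is to verify the three items of Lemma~\ref{lem:U} by the now-standard Nehari-manifold fibering argument, adapted to the competitive system. For a fixed $\bar u=(u_1,\ldots,u_\ell)\in\cT^G$, introduce the function $\psi_{\bar u}:(0,\infty)^\ell\to\r$, $\psi_{\bar u}(\bar s):=\cJ(\bar s\bar u)$, which reads
\[
\psi_{\bar u}(s_1,\ldots,s_\ell)=\frac12\sum_{i=1}^\ell s_i^2-\frac{1}{2^*}\sum_{i=1}^\ell s_i^{2^*}\im|u_i|^{2^*}\dm-\frac{1}{2^*}\mathop{\sum_{i,j=1}^\ell}_{j\neq i}\lambda_{ij}s_i^{2^*/2}s_j^{2^*/2}\im|u_j|^{2^*/2}|u_i|^{2^*/2}\dm .
\]
The key structural observation is that, because $\lambda_{ij}<0$ and $2^*>2$, the off-diagonal terms enter with a positive sign and are \emph{convex} in each $s_i$ along with everything else that is subtracted; more precisely, writing $a_i:=\im|u_i|^{2^*}\dm>0$ and $b_{ij}:=-\lambda_{ij}\im|u_j|^{2^*/2}|u_i|^{2^*/2}\dm\geq 0$, we have $\psi_{\bar u}(\bar s)=\frac12\sum s_i^2-\frac{1}{2^*}\sum a_i s_i^{2^*}-\frac{1}{2^*}\sum_{j\neq i}b_{ij}s_i^{2^*/2}s_j^{2^*/2}$, and $\bar s\bar u\in\cN^G$ precisely when $\nabla\psi_{\bar u}(\bar s)=0$, i.e. $s_i^2=a_is_i^{2^*}+\sum_{j\neq i}b_{ij}s_i^{2^*/2}s_j^{2^*/2}$ for every $i$. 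For item~$(i)$, I would show that any critical point of $\psi_{\bar u}$ in $(0,\infty)^\ell$ is the unique global maximum. Uniqueness and the maximum property follow from a strict-concavity-type argument after the substitution $t_i:=s_i^2$: the map $(t_1,\ldots,t_\ell)\mapsto\frac12\sum t_i-\frac{1}{2^*}\sum a_it_i^{2^*/2}-\frac{1}{2^*}\sum_{j\neq i}b_{ij}t_i^{2^*/4}t_j^{2^*/4}$ has the first term linear and the remaining (subtracted) terms strictly convex on $(0,\infty)^\ell$ because each monomial $t_i^{2^*/2}$ and each product $t_i^{2^*/4}t_j^{2^*/4}$ is convex with $2^*/4>1/2$ being irrelevant---what matters is that the exponents sum to $2^*/2>1$, so the subtracted part is strictly convex (this is exactly the computation carried out in \cite{cs}), hence $\psi_{\bar u}\circ(\sqrt{\cdot})$ is strictly concave and has at most one critical point, necessarily its global max. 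Existence of such a critical point when $\bar u\in\cU^G$ is given by definition, so $\bar s_{\bar u}$ is well defined and unique.

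For item~$(ii)$, nonemptiness of $\cU^G$ follows by taking $u_1,\ldots,u_\ell\in H^1_g(M)^G$ with pairwise disjoint supports (possible since $\dim Gp<m$, as noted in the excerpt): then all $b_{ij}=0$, the system decouples into $\ell$ scalar equations $s_i^2=a_is_i^{2^*}$, each of which has the explicit positive solution $s_i=a_i^{-1/(2^*-2)}$, so after normalizing the $u_i$ to the unit sphere we land in $\cU^G$. Openness of $\cU^G$ in $\cT^G$ is the more delicate point and I expect it to be the main obstacle: one must show that the property ``$\exists\,\bar s\in(0,\infty)^\ell$ with $\nabla\psi_{\bar u}(\bar s)=0$'' is open in $\bar u$. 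The clean way is to use the implicit function theorem: at a solution $\bar s_{\bar u}$, the Hessian $D^2\psi_{\bar u}(\bar s_{\bar u})$ is negative definite (this follows from the strict concavity in the $t$-variables established above, together with the chain rule, since at a critical point the first-derivative cross terms from the substitution vanish), hence invertible, so the equation $\nabla\psi_{\bar u}(\bar s)=0$ can be solved for $\bar s$ as a $\cC^1$ function of $\bar u$ in a neighborhood; this simultaneously shows $\cU^G$ is open and that $\bar u\mapsto\bar s_{\bar u}$ is continuous (indeed $\cC^1$) there. One must also check that these local solutions stay in $(0,\infty)^\ell$ and do not escape to the boundary, which follows from the lower bound of Lemma~\ref{lem:nehari} (giving $s_i^2\|u_i\|_g^2=\|s_iu_i\|_g^2\geq d_0$, so $s_i$ is bounded below) and from an a priori upper bound for $s_i$ coming from $s_i^2=a_is_i^{2^*}+\sum_j b_{ij}s_i^{2^*/2}s_j^{2^*/2}\geq a_is_i^{2^*}$, whence $s_i\leq a_i^{-1/(2^*-2)}$.

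For item~$(iii)$, the map $\bar u\mapsto\bar s_{\bar u}\bar u$ from $\cU^G$ to $\cN^G$ is well defined by $(i)$, continuous by the $\cC^1$ dependence of $\bar s_{\bar u}$ just obtained, and surjective because any $\bar v=(v_1,\ldots,v_\ell)\in\cN^G$ can be written as $\bar v=\bar s\bar u$ with $u_i:=v_i/\|v_i\|_g\in\mathcal S^G$ and $s_i:=\|v_i\|_g>0$, so $\bar u\in\cU^G$ and $\bar s_{\bar u}=\bar s$ by uniqueness. Injectivity: if $\bar s_{\bar u}\bar u=\bar s_{\bar w}\bar w$ in $\cN^G$ with $\bar u,\bar w\in\cT^G$, then comparing $\|\cdot\|_g$ of each component forces $\bar s_{\bar u}=\bar s_{\bar w}$ (all $s_i>0$) and hence $\bar u=\bar w$. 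Finally the inverse $\cN^G\to\cU^G$ is $\bar v\mapsto(v_1/\|v_1\|_g,\ldots,v_\ell/\|v_\ell\|_g)$, which is continuous since the lower bound $\|v_i\|_g^2\geq d_0$ of Lemma~\ref{lem:nehari} keeps the denominators away from zero. Thus the map is a homeomorphism. The whole argument is a direct transcription of \cite{cs} (and of \cite{cpt} in the nonsymmetric setting), the only new input being that everything takes place in the $G$-invariant subspace, which is harmless since $H^1_g(M)^G$ is a closed subspace and $\cT^G,\cN^G$ are defined inside it.
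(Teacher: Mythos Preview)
Your overall strategy matches the paper's: both defer to \cite[Proposition~3.1]{cs}, and your handling of $(ii)$ and $(iii)$---nonemptiness via functions with disjoint supports, openness via the implicit function theorem at a point where the Hessian is negative definite, and the explicit continuous inverse $\bar v\mapsto(\|v_i\|_g^{-1}v_i)_i$---is correct.

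The concavity argument you sketch for $(i)$, however, does not work as written. First, there is a sign slip: with $b_{ij}:=-\lambda_{ij}\int_M|u_j|^{2^*/2}|u_i|^{2^*/2}\,\mathrm{d}\mu_g\geq 0$ the cross term in $\psi_{\bar u}$ enters with a \emph{plus} sign, so the Nehari identity reads $s_i^2=a_is_i^{2^*}-\sum_{j\neq i}b_{ij}s_i^{2^*/2}s_j^{2^*/2}$; this also flips your a~priori estimate in $(ii)$ to the \emph{lower} bound $s_i\geq a_i^{-1/(2^*-2)}$, which is harmless there. Second, and more substantively, after the substitution $t_i=s_i^2$ the cross monomials $(t_i,t_j)\mapsto t_i^{\alpha}t_j^{\alpha}$ with $\alpha=2^*/4>\tfrac12$ are neither convex nor concave on $(0,\infty)^2$: their Hessian has determinant $\alpha^2(1-2\alpha)\,t_i^{2\alpha-2}t_j^{2\alpha-2}<0$, hence is indefinite. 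So the ``linear minus strictly convex'' picture fails and $\psi_{\bar u}$ is not globally strictly concave in the $t$-variables for any $m\geq 3$. A substitution that does work is $r_i:=s_i^{2^*}$: then $\psi_{\bar u}$ becomes
\[
\tfrac12\sum_i r_i^{2/2^*}-\tfrac{1}{2^*}\sum_i a_ir_i+\tfrac{1}{2^*}\sum_{j\neq i}b_{ij}\sqrt{r_ir_j},
\]
where $2/2^*=(m-2)/m\in(0,1)$ makes the first sum strictly concave, the second is linear, and each $\sqrt{r_ir_j}$ is concave; hence $\psi_{\bar u}$ is strictly concave in the $r$-variables and has at most one critical point in $(0,\infty)^\ell$, necessarily its global maximum. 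With this correction the rest of your sketch goes through.
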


\begin{proof}
The proof is exactly the same as that of \cite[Proposition 3.1]{cs}.
\end{proof}

Define $\Psi:\cU^G\to\r$ by
\begin{equation*}
\Psi(\bar u): = \cJ(\bar s_{\bar u}\bar u).
\end{equation*}
If $\Psi$ is of class $\cC^1$ the norm of $\Psi'(\bar u)$ in the cotangent space $\mathrm{T}_{\bar u}^*(\cT^G)$ to $\cT^G$ at $\bar u$ is defined as
$$\|\Psi'(\bar u)\|_*:=\sup\limits_{\substack{\bar v\in\mathrm{T}_{\bar u}(\cU^G) \\\bar v\neq 0}}\frac{|\Psi'(\bar u)\bar v|}{\|\bar v\|_g},$$
where $\mathrm{T}_{\bar u}(\cU^G)$ is the tangent space to $\cU^G$ at $\bar u$.

A sequence $(\bar u_n)$ in $\cU^G$ is called a $(PS)_c^G$\emph{-sequence for} $\Psi$ if $\Psi(\bar u_n)\to c$ and $\|\Psi'(\bar u_n)\|_*\to 0$, and $\Psi$ is said to satisfy the $(PS)_c^G$\emph{-condition} if every such sequence has a convergent subsequence. Similarly, a $(PS)_c^G$\emph{-sequence for} $\cJ$ is a sequence $(\bar u_n)$ in $\cH^G$ such that $\cJ(\bar u_n)\to 0$ and $\|\cJ'(\bar u_n)\|_{(\cH^G)'}\to 0$. $\cJ$ satisfies the $(PS)_c^G$\emph{-condition} if any such sequence has a convergent subsequence. As usual, $(\cH^G)'$ stands for the dual space of $\cH^G$.

\begin{lemma} \label{lem:psi}
\begin{itemize}
\item[$(i)$] $\Psi\in\cC^1(\cU^G,\r)$,
\begin{equation*}
\Psi'(\bar u)\bar v = \cJ'(\bar s_{\bar u}\bar u)[\bar s_{\bar u}\bar v] \quad \text{for all } \bar u\in\cU^G \text{ and }\bar v\in \mathrm{T}_{\bar u}(\cU^G),
\end{equation*}
and there exists $d_0>0$ such that
$$d_0\,\|\cJ'(\bar s_{\bar u}\bar u)\|_{(\cH^G)'}\leq\|\Psi'(\bar u)\|_*\leq |\bar s_{\bar u}|_\infty\|\cJ'(\bar s_{\bar u}\bar u)\|_{(\cH^G)'}\quad \text{for all } \bar u\in\cU^G,$$
where $|\bar s|_\infty=\max\{|s_1|,\ldots,|s_q|\}$ if $\bar s=(s_1,\ldots,s_q)$.
\item[$(ii)$] Let $\bar u_n\in\cU^G$. If $(\bar u_n)$ is a $(PS)_c^G$-sequence for $\Psi$, then $(\bar s_{\bar u_n}\bar u_n)$ is a $(PS)_c^G$-sequence for $\cJ$.
\item[$(iii)$] Let $\bar u\in\cU^G$. Then, $\bar u$ is a critical point of $\Psi$ if and only if $\bar s_{\bar u}\bar u$ is a critical point of $\cJ$ if and only if $\bar s_{\bar u}\bar u$ is a $G$-invariant solution of \eqref{eq:s}.
\item[$(iv)$] If $(\bar u_n)$ is a sequence in $\cU^G$ and $\bar u_n\to\bar u\in\partial(\cU^G)$, then $\Psi(\bar u_n)\to\infty$.
\item[$(v)$]$\bar{u}\in\cU^G$ if and only if $-\bar{u}\in\cU^G$, and $\Psi(\bar u)=\Psi(-\bar u)$.
\end{itemize}
\end{lemma}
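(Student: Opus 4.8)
The plan is to carry out the fibering reduction of \cite{cs}, viewing $\Psi$ as a reduced functional on the open submanifold $\cU^G$ of $\cT^G$ and transferring all analytic information between $\cJ$ on $\cN^G$ and $\Psi$ on $\cU^G$ through the homeomorphism $\bar u\mapsto\bar s_{\bar u}\bar u$ of Lemma \ref{lem:U}. For part $(i)$ I first reparametrize: for $\bar u=(u_1,\dots,u_\ell)\in\cT^G$ set $\vp_{\bar u}(\bar s):=\cJ(\bar s\bar u)$ for $\bar s\in(0,\infty)^\ell$, which, using $\|u_i\|_g=1$, equals the explicit function $\vp_{\bar u}(\bar s)=\tfrac12\sum_i s_i^2-\tfrac1{2^*}\sum_i a_i(\bar u)s_i^{2^*}-\tfrac1{2^*}\sum_{i\ne j}\lambda_{ij}b_{ij}(\bar u)s_i^{2^*/2}s_j^{2^*/2}$ with $a_i(\bar u)=\im|u_i|^{2^*}\dm>0$ and $b_{ij}(\bar u)=\im|u_j|^{2^*/2}|u_i|^{2^*/2}\dm\ge0$ depending smoothly on $\bar u$. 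Since $s_i\,\partial_{s_i}\vp_{\bar u}(\bar s)=\partial_i\cJ(\bar s\bar u)[s_iu_i]$ and $s_i>0$, the relation $\bar s\bar u\in\cN^G$ is equivalent to $\nabla_{\bar s}\vp_{\bar u}(\bar s)=0$, and by Lemma \ref{lem:U}$(i)$ the point $\bar s_{\bar u}$ is its unique solution and the global maximum of $\vp_{\bar u}$. To get that $\bar u\mapsto\bar s_{\bar u}$ is $\cC^1$ I apply the implicit function theorem to $\Phi(\bar u,\bar s):=\nabla_{\bar s}\vp_{\bar u}(\bar s)$ at $(\bar u,\bar s_{\bar u})$; the only nontrivial point is that $D_{\bar s}\Phi(\bar u,\bar s_{\bar u})$, i.e.\ the Hessian of $\vp_{\bar u}$ at its maximum, is invertible. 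This non-degeneracy is the technical heart of the lemma; it is obtained exactly as in \cite[Proposition 3.1]{cs}, using the identities $s_{\bar u,i}^2=a_i s_{\bar u,i}^{2^*}+\sum_{j\ne i}\lambda_{ij}b_{ij}s_{\bar u,i}^{2^*/2}s_{\bar u,j}^{2^*/2}$ valid on $\cN^G$ together with $\lambda_{ij}<0$. Then $\Psi=\cJ\circ(\bar u\mapsto\bar s_{\bar u}\bar u)$ is $\cC^1$ on $\cU^G$.

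The derivative formula then follows from the chain rule together with the observation that $\cJ'(\bar s_{\bar u}\bar u)$ annihilates every radial direction $(0,\dots,u_i,\dots,0)$ --- this is precisely the defining relation of $\cN^G$ divided by $s_{\bar u,i}>0$ --- so the contribution of $D\bar s_{\bar u}$ drops out and $\Psi'(\bar u)\bar v=\cJ'(\bar s_{\bar u}\bar u)[\bar s_{\bar u}\bar v]$. The upper estimate is immediate from $\|\bar s_{\bar u}\bar v\|_g\le|\bar s_{\bar u}|_\infty\|\bar v\|_g$. For the lower one, write an arbitrary $\bar w\in\cH^G$ as its radial part plus a tangential part $\bar w^T$ with $\langle w_i^T,u_i\rangle_g=0$; the radial part is killed by $\cJ'(\bar s_{\bar u}\bar u)$, so testing $\Psi'(\bar u)$ against $\bar v:=(w_1^T/s_{\bar u,1},\dots,w_\ell^T/s_{\bar u,\ell})\in\mathrm{T}_{\bar u}(\cU^G)$, whose norm is $\le d_0^{-1/2}\|\bar w\|_g$ since $s_{\bar u,i}^2\ge d_0$ by Lemma \ref{lem:nehari}, and using $\bar s_{\bar u}\bar v=\bar w^T$, gives $\sqrt{d_0}\,\|\cJ'(\bar s_{\bar u}\bar u)\|_{(\cH^G)'}\le\|\Psi'(\bar u)\|_*$.

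Parts $(ii)$ and $(iii)$ are then short consequences. For $(ii)$, if $(\bar u_n)$ is a $(PS)_c^G$-sequence for $\Psi$ then $\bar w_n:=\bar s_{\bar u_n}\bar u_n$ satisfies $\cJ(\bar w_n)=\Psi(\bar u_n)\to c$ and $\|\cJ'(\bar w_n)\|_{(\cH^G)'}\le d_0^{-1}\|\Psi'(\bar u_n)\|_*\to0$ by the estimate in $(i)$. For $(iii)$, those same two inequalities give $\Psi'(\bar u)=0\iff\cJ'(\bar s_{\bar u}\bar u)=0$, and the principle of symmetric criticality already used above identifies the critical points of $\cJ$ on $\cH^G$ with the $G$-invariant solutions of \eqref{eq:s}; since $\bar s_{\bar u}\bar u\in\cN^G$ has all components nontrivial, the three statements are equivalent.

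For part $(iv)$ I argue by contradiction: if $\bar u_n\to\bar u\in\partial(\cU^G)$ in $\cT^G$ but $\Psi(\bar u_n)=\tfrac1m\sum_i s_{\bar u_n,i}^2$ stays bounded along a subsequence, then $(\bar s_{\bar u_n})$ is bounded, and it is bounded away from $0$ since $s_{\bar u_n,i}^2\ge d_0$, so along a further subsequence $\bar s_{\bar u_n}\to\bar s\in[\sqrt{d_0},\infty)^\ell\subset(0,\infty)^\ell$ and hence $\bar w_n=\bar s_{\bar u_n}\bar u_n\to\bar s\bar u$. Since $\cN^G$ is closed by Lemma \ref{lem:nehari}, $\bar s\bar u\in\cN^G$, so $\bar u\in\cU^G$, which contradicts $\bar u\in\partial(\cU^G)$ because $\cU^G$ is open in $\cT^G$ by Lemma \ref{lem:U}$(ii)$; therefore $\Psi(\bar u_n)\to\infty$. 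Finally, part $(v)$ is immediate since every term of $\cJ$ depends only on the moduli $|u_i|$, so $\cJ(\bar s\bar u)=\cJ(-\bar s\bar u)$ for all $\bar s$; by the uniqueness in Lemma \ref{lem:U}$(i)$ this forces $\bar s_{-\bar u}=\bar s_{\bar u}$, which yields both $\bar u\in\cU^G\iff-\bar u\in\cU^G$ and $\Psi(-\bar u)=\cJ(-\bar s_{\bar u}\bar u)=\cJ(\bar s_{\bar u}\bar u)=\Psi(\bar u)$. The only genuinely delicate step is the non-degeneracy of $\bar s_{\bar u}$ behind the $\cC^1$-regularity in $(i)$; everything else is bookkeeping with the homeomorphism of Lemma \ref{lem:U} and the uniform bound $\|u_i\|_g^2\ge d_0$ on $\cN^G$.
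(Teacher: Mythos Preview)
Your proposal is correct and follows essentially the same route as the paper, which simply cites \cite[Theorem 3.3]{cs}; you have spelled out that argument in the present notation. The only cosmetic remark is that your derivation of the lower bound in $(i)$ yields the constant $\sqrt{d_0}$ (with $d_0$ from Lemma~\ref{lem:nehari}) rather than $d_0$, but since the statement only asserts the existence of \emph{some} positive constant this is harmless.
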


\begin{proof}
The proof of these statements is exactly the same as that of \cite[Theorem 3.3]{cs}.
\end{proof}

\begin{lemma}
$\Psi$ satisfies the $(PS)_c^G$-condition for every $c\in\r$.
\end{lemma}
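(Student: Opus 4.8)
The plan is to reduce the statement to a Palais--Smale condition for $\cJ$ on $\cH^G$ and then invoke the compact embedding of Lemma~\ref{lem:hv}. Let $(\bar u_n)$ be a $(PS)_c^G$-sequence for $\Psi$ in $\cU^G$. By Lemma~\ref{lem:psi}$(ii)$, the sequence $\bar w_n:=\bar s_{\bar u_n}\bar u_n$ lies in $\cN^G$ and is a $(PS)_c^G$-sequence for $\cJ$, i.e. $\cJ(\bar w_n)\to c$ and $\|\cJ'(\bar w_n)\|_{(\cH^G)'}\to 0$. It therefore suffices to show that such a sequence has a subsequence converging strongly in $\cH^G$, and then to transport the conclusion back to $\cU^G$ via the homeomorphism of Lemma~\ref{lem:U}$(iii)$. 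Write $\bar w_n=(w_{n,1},\dots,w_{n,\ell})$. Since $\bar w_n\in\cN^G$ we have $\cJ(\bar w_n)=\tfrac1m\sum_{i=1}^\ell\|w_{n,i}\|_g^2$, which is bounded because $\cJ(\bar w_n)\to c$; hence $(\bar w_n)$ is bounded in $\cH^G$.

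Passing to a subsequence, $w_{n,i}\rightharpoonup w_{\infty,i}$ in $H_g^1(M)^G$ for each $i$, and by Lemma~\ref{lem:hv} the convergence is strong in $L_g^{2^*}(M)$. Now test the functional $\partial_i\cJ(\bar w_n)$ against $w_{n,i}-w_{\infty,i}$. Since $\|\partial_i\cJ(\bar w_n)\|\leq\|\cJ'(\bar w_n)\|_{(\cH^G)'}\to 0$ and $\|w_{n,i}-w_{\infty,i}\|_g$ is bounded, we get $\partial_i\cJ(\bar w_n)(w_{n,i}-w_{\infty,i})\to 0$. In the expansion of this quantity, the nonlinear terms $\im|w_{n,i}|^{2^*-2}w_{n,i}(w_{n,i}-w_{\infty,i})\dm$ and $\im\lambda_{ij}|w_{n,j}|^\frac{2^*}{2}|w_{n,i}|^{\frac{2^*}{2}-2}w_{n,i}(w_{n,i}-w_{\infty,i})\dm$ tend to $0$ by Hölder's inequality, because their first factors are bounded in $L_g^{(2^*)'}(M)$ while $w_{n,i}-w_{\infty,i}\to 0$ in $L_g^{2^*}(M)$; and $\langle w_{\infty,i},w_{n,i}-w_{\infty,i}\rangle_g\to 0$ by weak convergence. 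Combining these facts yields $\|w_{n,i}-w_{\infty,i}\|_g^2=\langle w_{n,i}-w_{\infty,i},\,w_{n,i}-w_{\infty,i}\rangle_g\to 0$, so $\bar w_n\to\bar w_\infty$ strongly in $\cH^G$.

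It remains to check that $\bar w_\infty\in\cN^G$ and to pull back. By Lemma~\ref{lem:nehari}, $\|w_{n,i}\|_g^2\geq d_0>0$ for every $n$ and $i$; strong convergence gives $\|w_{\infty,i}\|_g^2\geq d_0$, so each $w_{\infty,i}\neq 0$, and since $\cN^G$ is closed, $\bar w_\infty\in\cN^G$. Applying the inverse of the homeomorphism $\cU^G\to\cN^G$ of Lemma~\ref{lem:U}$(iii)$, which on $\cN^G$ is the (continuous) componentwise normalization $\bar w\mapsto(w_1/\|w_1\|_g,\dots,w_\ell/\|w_\ell\|_g)$, we obtain $\bar u_n\to\bar u_\infty\in\cU^G$ along the subsequence. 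Hence $\Psi$ satisfies the $(PS)_c^G$-condition. (If $c\leq 0$ there is nothing to prove: $\Psi\geq\inf_{\cN^G}\cJ>0$ by Lemma~\ref{lem:nehari}, so there are no $(PS)_c^G$-sequences and the condition holds vacuously.)

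The main obstacle is, as usual at the critical exponent $2^*$, the potential loss of compactness through concentration: without symmetry one only has a local Palais--Smale condition. The whole point is that the hypothesis $\dim(Gp)\geq 1$ restores the compactness of the Sobolev embedding on $G$-invariant functions via Lemma~\ref{lem:hv}, after which the argument is entirely standard. The only genuinely delicate bookkeeping is the transfer between $\Psi$ on the manifold $\cU^G$ and $\cJ$ on $\cH^G$, i.e. making sure the limit does not escape to $\partial(\cU^G)$; this is exactly what the uniform lower bound $d_0$ of Lemma~\ref{lem:nehari} and the closedness of $\cN^G$ guarantee.
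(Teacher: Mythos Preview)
Your proof is correct and follows essentially the same approach as the paper: reduce via Lemma~\ref{lem:psi}$(ii)$ to a $(PS)_c^G$-sequence for $\cJ$ on $\cN^G$, use the compact embedding of Lemma~\ref{lem:hv} to upgrade weak to strong convergence, and pull back through the homeomorphism of Lemma~\ref{lem:U}$(iii)$. The paper merely sketches these steps and defers the convergence argument to \cite[Proposition~3.6]{cp}, whereas you spell out the standard testing-against-$w_{n,i}-w_{\infty,i}$ computation explicitly.
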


\begin{proof}
Let $(\bar{v}_n)$ be a $(PS)_c^G$-sequence for $\mathcal{J}$ with $\bar v_n\in\cN^G$. A standard argument shows that this sequence is bounded. Then, using Lemma \ref{lem:hv} as in \cite[Proposition 3.6]{cp}, one sees that $(\bar{v}_n)$ contains a convergent subsequence. The claim now follows from Lemmas \ref{lem:psi}$(ii)$ and \ref{lem:U}$(iii)$.
\end{proof}

Let $\mathcal{Z}$ be a nonempty subset of $\cT^G$ such that $\bar{u}\in\mathcal{Z}$ if and only if $-\bar{u}\in\mathcal{Z}$. Recall that the \emph{genus of $\mathcal{Z}$}, denoted $\mathrm{genus}(\mathcal{Z})$, is the smallest integer $k\geq 1$ such that there exists an odd continuous function $\mathcal{Z}\rightarrow\mathbb{S}^{k-1}$ into the unit sphere $\mathbb{S}^{k-1}$ in $\r^k$. If no such $k$ exists, we define $\mathrm{genus}(\mathcal{Z}):=\infty$. We set $\mathrm{genus}(\emptyset):=0$.

\begin{lemma}
$\mathrm{genus}(\cU^G)=\infty$.
\end{lemma}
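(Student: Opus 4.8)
The plan is to show that for every $k\in\n$ there is a subset of $\cU^G$, symmetric under $\bar u\mapsto-\bar u$, whose genus is at least $k$; by monotonicity of the genus this forces $\mathrm{genus}(\cU^G)=\infty$. The mechanism is the standard one for critical-point theory on Nehari-type sets: exhibit a $G$-invariant subspace of $H^1_g(M)^G$ of arbitrarily large dimension, build from it a symmetric subset of $\cT^G$ of the corresponding genus, and check that this subset lies inside $\cU^G$.

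First I would use the observation already made in the excerpt that, since $\dim(Gp)<m$ for every $p$, one can choose, for any $k\in\n$, functions $w_1,\dots,w_k\in H^1_g(M)^G\smallsetminus\{0\}$ with pairwise disjoint supports; let $W:=\mathrm{span}\{w_1,\dots,w_k\}$, a $k$-dimensional $G$-invariant subspace on which any two elements with disjoint ``coordinate supports'' are orthogonal in $\langle\,\cdot\,,\cdot\,\rangle_g$. Next, for a single equation the set $\Sigma_k:=\{u\in W:\|u\|_g=1\}$ is homeomorphic to $\mathbb S^{k-1}$ by an odd homeomorphism, so $\mathrm{genus}(\Sigma_k)=k$. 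I would then diagonally embed: consider $\Delta_k:=\{(u,\dots,u):u\in\Sigma_k\}\subset\cT^G$, which is again odd-homeomorphic to $\mathbb S^{k-1}$, hence has genus $k$. The key point to verify is that $\Delta_k\subset\cU^G$, i.e.\ that for each $\bar u=(u,\dots,u)$ with $u\in\Sigma_k$ there exists $\bar s\in(0,\infty)^\ell$ with $\bar s\bar u\in\cN^G$; by Lemma~\ref{lem:U}$(i)$ such $\bar s$, if it exists, is the unique maximizer of $\cJ(\bar s\bar u)$ over $(0,\infty)^\ell$. Existence of the maximizer is the content one must check: the map $\bar s\mapsto\cJ(\bar s\bar u)$ is $C^1$, tends to $-\infty$ as $|\bar s|\to\infty$ (because the two negative terms — the $2^*$-power term and, since $\lambda_{ij}<0$, the coupling term — dominate), is positive for small $\bar s$ (the quadratic part dominates near $0$, using $2<2^*$), and hence attains an interior maximum, at which $\partial_i\cJ(\bar s\bar u)(s_iu)=0$ for all $i$; scaling shows this gives a point of $\cN^G$.

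The cleanest route, and the one I would actually follow, is to avoid re-deriving this: the diagonal set $\Delta_k$ reduces the question to the \emph{scalar} Nehari problem, where existence of the projection $\bar s_{\bar u}$ is classical (and in the competitive case is exactly Lemma~\ref{lem:U}, whose proof mirrors \cite[Proposition 3.1]{cs}); one only needs that for $\bar u=(u,\dots,u)$ the system of equations $\partial_i\cJ(\bar s\bar u)(s_iu)=0$, $i=1,\dots,\ell$, has a positive solution, which by symmetry reduces to a single scalar equation $t=\|u\|_g^{-(2^*-2)}(1+\sum_{j\neq i}\lambda_{ij})^{-1}\cdots$ — wait, one must be careful that $1+\sum_{j\neq i}\lambda_{ij}$ could be nonpositive, so the honest version is to take $\bar u$ with \emph{disjoint} supports rather than diagonal: choose $u_i:=w_{\sigma(i)}/\|w_{\sigma(i)}\|_g$ built from disjoint-support blocks, so the coupling terms $\int_M\lambda_{ij}|u_j|^{2^*/2}|u_i|^{2^*/2}$ all vanish and the system decouples into $\ell$ independent scalar Nehari scalings, each solvable. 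This furnishes, for each $k$, a symmetric subset of $\cU^G$ (namely $\{(t_1w_{i_1},\dots,t_\ell w_{i_\ell})/(\cdots):\ \text{normalized}\}$ parametrized by a $k$-dimensional sphere's worth of coefficient choices) of genus $\geq k$.

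The main obstacle is purely bookkeeping: arranging the test functions so that the competitive coupling does not obstruct the Nehari scaling, and confirming the genus lower bound for the resulting symmetric set. Both are handled by the disjoint-support trick — disjoint supports kill the coupling integrals, so $\cN^G$ membership is equivalent to $\ell$ decoupled scalar conditions, and the set one obtains is odd-homeomorphic to a sphere of dimension growing with $k$. Once $\mathrm{genus}(\cU^G)\geq k$ for all $k$, we conclude $\mathrm{genus}(\cU^G)=\infty$.
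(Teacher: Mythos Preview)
Your final route via disjoint supports is correct and is exactly the argument of \cite[Lemma~4.5]{cs} that the paper invokes by reference. Two points are worth tightening. First, your diagonal attempt fails for the reason you half-suspected, but the error is earlier than you flagged: since $\lambda_{ij}<0$, the coupling contribution $-\tfrac{1}{2^*}\sum_{i\neq j}\lambda_{ij}\int_M|u_j|^{2^*/2}|u_i|^{2^*/2}$ in $\cJ$ is \emph{positive}, not negative. Hence for $\bar u=(u,\dots,u)$ and $s_1=\cdots=s_\ell=s$ one gets $\cJ(\bar s\bar u)\sim -\tfrac{1}{2^*}\big(\ell+\sum_{i\neq j}\lambda_{ij}\big)s^{2^*}|u|_{2^*}^{2^*}$, which goes to $+\infty$ along this ray once $\sum_{i\neq j}\lambda_{ij}<-\ell$; in that regime $\cJ(\cdot\,\bar u)$ has no interior maximum on $(0,\infty)^\ell$ and $(u,\dots,u)\notin\cU^G$. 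Second, your last paragraph should be made explicit: choose $k\ell$ nonzero $G$-invariant functions $w_{ij}$ ($i=1,\dots,\ell$, $j=1,\dots,k$) with pairwise disjoint supports, and for $\bar t\in\mathbb{S}^{k-1}$ set $v_i(\bar t):=\sum_{j=1}^k t_j w_{ij}$ and $\hat v_i:=v_i/\|v_i\|_g$. The supports of $\hat v_1,\dots,\hat v_\ell$ are pairwise disjoint, so all coupling integrals vanish and the Nehari scaling decouples into $\ell$ scalar problems, giving $(\hat v_1,\dots,\hat v_\ell)\in\cU^G$. The map $\bar t\mapsto(\hat v_1,\dots,\hat v_\ell)$ is odd and continuous, and by Borsuk--Ulam its image has genus $\geq k$; monotonicity of the genus then yields $\mathrm{genus}(\cU^G)=\infty$.
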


\begin{proof}
This is shown following the argument in \cite[Lemma 4.5]{cs}.
\end{proof}

\begin{proof}[Proof of Theorem \ref{thm:existence}]
Lemma \ref{lem:psi}$(iv)$ implies that $\cU^G$ is positively invariant under the negative pseudogradient flow of $\Psi$, so the usual deformation lemma holds true for $\Psi$ in $\cU^G$, see e.g. \cite[Section II.3]{struwe} or \cite[Section 5.3]{w}. As $\Psi$ satisfies the $(PS)_c^G$-condition for every $c\in\r$,
$$\inf_{\cU^G}\Psi=\inf_{\cN^G}\cJ$$
is attained, i.e., the system \eqref{eq:s} has a least energy $G$-invariant solution. Moreover, since $\Psi$ is even and $\mathrm{genus}(\cU^G)=\infty$, a standard variational argument shows that $\Psi$ has an unbounded sequence of critical values.
\end{proof}

\section{The limit profile of minimizers}
\label{sec:op}

We assume throughout that $1\leq\dim(Gp)<m$ for every $p\in M$. Let $\Omega$ be an open $G$-invariant subset of $M$, $H^1_{g,0}(\Omega)$ be the closure of $\cC_c^\infty(\Omega)$ in $H^1_g(M)$ and $H^1_{g,0}(\Omega)^G$ be the space of $G$-invariant functions in $H^1_{g,0}(\Omega)$. The solutions of \eqref{eq:u} are the critical points of the $\cC^2$-functional $J_\Omega:H^1_{g,0}(\Omega)^G\to\r$ given by
$$J_\Omega(u):=\frac{1}{2}\|u\|^2_g -\frac{1}{2^*}\im|u|^{2^*}\dm.$$
The nontrivial ones belong to the Nehari manifold
$$\cN_\Omega^G:=\{u\in H^1_{g,0}(\Omega)^G:u\neq 0\text{ and }J_\Omega'(u)u=0\},$$
which is a natural constraint for $J_\Omega$. So, a minimizer for $J_\Omega$ over $\cN_\Omega^G$ is a nontrivial solution of \eqref{eq:u}. We call it a \emph{least energy $G$-invariant solution}. A standard argument using Lemma \ref{lem:hv} shows that a minimizer does exist. Note that $J_\Omega(u)=\frac{1}{m}\|u\|_g^2=\frac{1}{m}\im|u|^{2^*}\dm$ if $u\in\cN_\Omega^G$, so the quantity defined in \eqref{eq:cOmega} is
$$c_\Omega^G=\inf_{u\in\cN_\Omega^G}J_\Omega(u).$$

\begin{proof}[Proof of Theorem \ref{thm:op}]
Let $\lambda_n\to-\infty$. We write $\cJ_n$ and $\cN_n^G$ for the functional and the set defined in \eqref{eq:J} and \eqref{eq:N} with $\lambda_{ij}=\lambda_n$ for all $i\neq j$.
Let
\begin{align*}
\cM_\ell^G:=&\{(v_1,\ldots,v_\ell)\in\cH^G:v_i\neq 0, \ \|v_i\|_g^2=\im|v_i|^{2^*}\dm, \ v_iv_j=0\text{ on }M\text{ if }i\neq j\},\\
c_\ell^G:=&\inf_{(v_1,\ldots,v_\ell)\in\cM_\ell}\,\frac{1}{m}\sum_{i=1}^\ell\|v_i\|_g^2.
\end{align*}
Since $\dim(Gp)<m$ for $p\in M$, the set $\cM_\ell^G$ is nonempty and, so, $c_\ell^G<\infty$. 

Let $\bar u_n=(u_{n,1},\ldots,u_{n,\ell})\in\cN_n^G$ be such that $\cJ_n(\bar u_n)=\inf_{\cN_n^G}\cJ_n$ and $u_{n,i}>0$ for every $n$ and $i$. Noting that $\cM_\ell^G\subset\cN_n^G$ for every $n\in\n$ and recalling Lemma \ref{lem:nehari}, we see that
$$0<\inf_{\cN_n^G}\cJ_n=\frac{1}{m}\sum_{i=1}^\ell\|u_{n,i}\|_g^2\leq c_\ell^G<\infty\qquad\text{for every \ }n\in\n.$$
Applying Lemma \ref{lem:hv} and passing to a subsequence, we get that $u_{n,i} \rh u_{\infty,i}$ weakly in $H^1_g(M)^G$, $u_{n,i} \to u_{\infty,i}$ strongly in $L^{2^*}_g(M)$ and $u_{n,i} \to u_{\infty,i}$ a.e. on $M$, for each $i=1,\ldots,\ell$. Hence, $u_{\infty,i} \geq 0$. 

As $\bar u_n\in\cN_n^G$, we have that
\begin{align*}
0&\leq\im|u_{n,j}|^{\frac{2^*}{2}}|u_{n,i}|^{\frac{2^*}{2}}\dm\leq \frac{\im|u_{n,i}|^{2^*}\dm}{-\lambda_n}\leq \frac{C}{-\lambda_n}\qquad\text{for each pair \ }j\neq i,
\end{align*}
and, letting $n\to\infty$, we obtain
$$\im |u_{\infty,j}|^{\frac{2^*}{2}}|u_{\infty,i}|^{\frac{2^*}{2}}\dm = 0.$$
So $u_{\infty,j} u_{\infty,i} = 0$ a.e. on $M$ whenever $i\neq j$. We also have that
\begin{equation*}
0<d_0\leq\|u_{n,i}\|_g^2 \leq \im|u_{n,i}|^{2^*}\dm\qquad\text{for every \ } n\in\n\text{ \ and \ }i=1,\ldots,\ell,
\end{equation*}
with $d_0$ as in Lemma \ref{lem:nehari}. Passing to the limit as $n\to\infty$, we see that $u_{\infty,i}\neq 0$ and
\begin{equation*}
\|u_{\infty,i}\|_g^2 \leq \im|u_{\infty,i}|^{2^*}\dm\qquad\text{for every \ }i=1,\ldots,\ell.
\end{equation*}
Hence, there exists $t_i\in(0,1]$ such that $\|t_iu_{\infty,i}\|_g^2 = \im|t_iu_{\infty,i}|^{2^*}\dm$. Consequently, $(t_1u_{\infty,1},\ldots,t_\ell u_{\infty,\ell})\in \cM_\ell^G$ and
$$c_\ell^G\leq \frac{1}{m}\sum_{i=1}^\ell\|t_iu_{\infty,i}\|_g^2\leq \frac{1}{m}\sum_{i=1}^\ell\|u_{\infty,i}\|_g^2\leq \frac{1}{m}\liminf_{n\to\infty}\sum_{i=1}^\ell\|u_{n,i}\|_g^2 \leq c_\ell^G.$$
It follows that $t_i=1$ and $u_{n,i} \to u_{\infty,i}$ strongly in $H^1_g(M)$. But then, $\|u_{\infty,i}\|_g^2 = \im|u_{\infty,i}|^{2^*}\dm$ and, passing to the limit in
\begin{align*}
\sum_{i=1}^\ell\|u_{n,i}\|_g^2 &=\sum_{i=1}^\ell |u_{n,i}|_{g,2^*}^{2^*}+\mathop{\sum_{i,j=1}^\ell}_{j\neq i} \int_M \lambda_{n}|u_{n,j}|^\frac{2^*}{2}|u_{n,i}|^\frac{2^*}{2},
\end{align*}
we obtain 
$$\lim_{n\to\infty}\int_M \lambda_n|u_{\infty,j}|^\frac{2^*}{2}|u_{\infty,i}|^\frac{2^*}{2}=0\qquad\text{for every \ }i\neq j.$$
Moreover, $(u_{\infty,1},\ldots,u_{\infty,\ell})\in \cM_\ell^G$ and 
$$\frac{1}{m}\sum_{i=1}^\ell\|u_{\infty,i}\|_g^2=c_\ell^G.$$

We have now all assumptions needed to apply \cite[Lemmas 4.3 and 4.4]{cpt} and conclude that $(u_{n,i})$ is uniformly bounded in the Hölder norm, i.e., for any $\alpha\in(0,1)$ there exists $C_\alpha>0$ such that
$$\|u_{n,i}\|_{\cC^{0,\alpha}(M)}\leq C_\alpha\qquad\text{for every \ } n\in\n\text{ \ and \ }i=1,\ldots,\ell.$$
As a consequence, $u_{n,i}\to u_{\infty,i}$ in $\cC^{0,\alpha}(M)$ for every $i$. In particular, $u_{\infty,i}$ is continuous and $G$-invariant, so the set $\o_i:=\{p\in M:u_{\infty,i}(p)>0\}$ is open and $G$-invariant. Since $u_{\infty,i}u_{\infty,j}=0$ if $i\neq j$, we have that $\o_i\cap\o_j=\emptyset$. This shows that $\{\o_1,\ldots,\o_\ell\}\in\cP_\ell^G$.

We claim that $u_{\infty,i}$ is a least energy $G$-invariant solution to \eqref{eq:u} in $\o_i$ for all $i$. Otherwise, $J_{\Omega_i}(u_{\infty,i})>c_{\Omega_i}^G$ for some $i$ and there would exist $v_i\in\cN_{\Omega_i}^G$ with $c_{\Omega_i}^G<J_{\Omega_i}(v_i)<J_{\Omega_i}(u_{\infty,i})$. But then, setting $v_j:=u_{\infty,j}$ for $j\neq i$, we would have that $(v_1,\ldots,v_\ell)\in\cM_\ell^G$ and
$$\frac{1}{m}\sum_{i=1}^\ell\|v_i\|_g^2<\frac{1}{m}\sum_{i=1}^\ell\|u_{\infty,i}\|_g^2=c_\ell^G,$$
contradicting the definition of $c_\ell^G$. Hence, $J_{\Omega_i}(u_{\infty,i})=c_{\Omega_i}^G$ for all $i=1,\ldots,\ell$, as claimed. A similar argument shows that $\o_i$ is connected.

As a consequence, if $\{\Theta_1,\ldots,\Theta_\ell\}\in\cP_\ell^G$, taking $w_i\in\cN_{\Theta_i}^G$ with $J_{\Theta_i}(w_i)=c_{\Theta_i}^G$, we have that $(w_1,\ldots,w_\ell)\in\cM_\ell^G$ and, therefore,
$$\sum_{i=1}^\ell c_{\o_i}^G=\frac{1}{m}\sum_{i=1}^\ell\|u_{\infty,i}\|_g^2=c_\ell^G\leq\frac{1}{m}\sum_{i=1}^\ell\|w_i\|_g^2=\sum_{i=1}^\ell c_{\Theta_i}^G.$$
This shows that $\{\o_1,\ldots,\o_\ell\}$ is an optimal $(G,\ell)$-partition, and completes the proof of statements $(i)$ and $(iii)$.

Statements $(ii)$ and $(iv)$ are local. In local coordinates the system \eqref{eq:s} with $\lambda_{ij}$ replaced by $\lambda_n$ becomes
\[
-\mathrm{div}(A(x)\nabla v_i)=f_i(x,v_i) + a(x) \sum_{\substack{j=1 \\ j\neq i}}^\ell \lambda_n |v_j|^\frac{2^*}{2} |v_i|^{\frac{2^*}{2}-2}v_i,\qquad x\in\Omega,
\]
where $\Omega\subset\rm$ is open and bounded, $a(x):=\sqrt{|g(x)|}$, $A(x):=\sqrt{|g(x)|}(g^{kl}(x))$, $f_i(x,s):=a(x)(|s|^{2^*-2}s-\kappa_mS_g(x)s)$ and, as usual, $|g|$ denotes the determinant of the metric $g=(g_{kl})$ in local coordinates and $(g^{kl})$ its inverse. Applying \cite[Theorem C.1]{cpt} we conclude that $(ii)$ and $(iv)$ are true locally on $M$, hence also globally.

A standard argument yields the proof of statement $(v)$. Namely, the $G$-invariant sign-changing solutions to the Yamabe equation \eqref{eq:y} belong to the set
$$\cE_M^G:=\{u\in\cN_M^G:u^+\in\cN_M^G\text{ \ and \ }u^-\in\cN_M^G\},$$
where $u^+:=\max\{u,0\}\neq 0$ and $u^-:=\min\{u,0\}\neq 0$. Moreover, as shown in \cite[Lemma 2.6]{ccn}, any minimizer of $J_M$ on $\cE_M^G$ is a $G$-invariant sign-changing solution of \eqref{eq:y}. For every $u\in\cE_M^G$, we have that $(u^+,u^-)\in\cM_2^G$ and $J_M(u)=\frac{1}{m}(\|u^+\|_g^2+\|u^-\|_g^2)$. Therefore,
$$\inf_{\cE_M^G}J_M\geq c_2^G=\frac{1}{m}(\|u_{\infty,1}\|_g^2+\|u_{\infty,2}\|_g^2).$$
As $u_{\infty,1}-u_{\infty,2}\in\cE_M^G$, it is a minimizer of $J_M$ on $\cE_M^G$. This completes the proof.
\end{proof}

\section{The Yamabe system on the standard sphere}
\label{sec:sphere}

In this section we give an account of some known results for the system \eqref{eq:s} on the standard sphere.

The stereographic projection $\sigma:\sm\smallsetminus\{N\}\to\rm$ from the north pole $N$ is a
conformal diffeomorphism. The conformal invariance of the operator $\mathscr{L}_{\bar g}$ (see \cite[Proposition 6.1.1]{h}) allows to establish a one-to-one correspondence between solutions to the system \eqref{eq:s} on the standard sphere $\sm$ and solutions to the system
\begin{equation} \label{eq:rm}
\begin{cases}
-\Delta v_i = |v_i|^{2^*-2}v_i + \sum\limits_{\substack{j=1 \\ j\neq i}}^\ell\lambda_{ij}|v_j|^\frac{2^*}{2}|v_i|^\frac{2^*}{2}v_i \\
v_i\in D^{1,2}(\rm), \quad v_i\neq 0,\qquad i=1,\ldots,\ell,
\end{cases}
\end{equation}
where $D^{1,2}(\rm):=\{v\in L^{2^*}(\rm):\nabla v\in L^2(\rm,\rm)\}$; see \cite{cp,css}.

Let us first consider the case of the single equation
\begin{equation} \label{eq:srm}
-\Delta u= |u|^{2^*-2}u,\qquad u\in D^{1,2}(\rm).
\end{equation}
It is well known \cite{au,oba,tal} that all positive solutions to \eqref{eq:srm} are the so-called {\em standard bubbles}
$$U_{\delta,y}(x):=\mathfrak c_m\frac{\delta^\frac{m-2}{2}}{(\delta^2+|x-y|^2)^\frac{m-2}{2}},\qquad x,y\in\mathbb R^m,\ \delta>0.$$
The existence of sign-changing solutions was first established by W.Y. Ding in \cite{d}. He considered solutions that are invariant under the action of the group $\Gamma:=O(n_1)\times O(n_2)$ with $n_1,n_2\geq 2$ and $n_1+n_2=m+1$, acting on $\sm\subset\r^{m+1}\equiv\r^{n_1}\times\r^{n_2}$ in the obvious way. Using a variational approach he proved the existence of infinitely many $\Gamma$-invariant sign-changing solutions to the Yamabe equation \eqref{eq:y} on $\sm$ or, equivalently, to the equation \eqref{eq:srm} on $\mathbb R^m$.
  
In \cite{dmpp1,dmpp2}, del Pino, Musso, Pacard and Pistoia exploited the symmetries of the sphere to build solutions to \eqref{eq:srm} which have large energy and concentrate along some special submanifolds of $\sm$. In particular, for $m\ge4$ they  obtained sequences of solutions to the Yamabe equation whose energy concentrates along one great circle or finitely many great circles that are linked to each other (corresponding to Hopf links embedded in $\mathbb S^3\times\{0\} \subset\mathbb S^m$), and for $m\ge5$ they also obtained sequences of solutions whose energy concentrates along a two-dimensional Clifford torus in $\mathbb S^3\times\{0\} \subset\mathbb S^m$.  These solutions are built via gluing techniques (e.g., a Ljapunov-Schmidt procedure) and can be described as the superposition of the constant solution to \eqref{eq:y} with a large number of copies of negative solutions of  \eqref{eq:y} which are highly concentrated at points evenly arranged along some special submanifolds of the sphere.

Recently, Fernández and Petean in \cite{fp} established the existence of solutions to the Yamabe problem \eqref{eq:y} on $\sm$ with precisely $\ell$ nodal domains which are isoparametric hypersurfaces, for every $\ell\geq 2$, using ODE techniques.

Regarding the competitive system (where all $\lambda_{ij}<0$), Guo, Li and Wei \cite{glw} established the existence of solutions for \eqref{eq:rm} with $\ell=2$ and $\lambda_{12}<0$ in $\r^3$, and using the approach developed in \cite{dmpp1} they  built a sequence of positive nonradial solutions whose first component looks like the constant function and the second component resembles a large number of copies of positive solutions of  \eqref{eq:y} concentrated at points that are  placed along a circle. The argument of their proof, which relies on the Ljapunov-Schmidt procedure, cannot be extended to higher dimensions because the coupling terms have linear (if $m=4$) or sublinear (if $m\ge5$) growth.

Following the variational approach presented in the previous sections, successively Clapp and Pistoia \cite{cp}, Clapp and Szulkin \cite{cs} and Clapp, Saldaña and Szulkin \cite{css} found $\Gamma$-invariant solutions to the competitive Yamabe system \eqref{eq:s} on $\sm$ for the groups considered by Ding, and described the limit profile of least energy solutions as $\lambda_{ij}\to\-\infty$. In this particular case, Theorems \ref{thm:existence} and \ref{thm:op} were proved in \cite{cp,cs} and \cite{cp,css} respectively. In addition, a more accurate description of the optimal $(\Gamma,\ell)$-partition of $\sm$ is provided in \cite{cp,css}. Note that the $\Gamma$-orbit of a point $p\in\sm$ is diffeomorphic to either $\mathbb{S}^{n_1-1}$, or $\mathbb{S}^{n_1-1}\times\mathbb{S}^{n_2-1}$, or $\mathbb{S}^{n_2-1}$, and that the map $q:\sm\to[0,\pi]$ given by
\begin{equation} \label{eq:orbit_map}
q(x,y):=\arccos(|x|^2-|y|^2),\quad \text{where \ }x\in\r^{n_1}, \ y\in\r^{n_2},
\end{equation}
is a quotient map identifying each $\Gamma$-orbit in $\sm$ to a single point. So $q$ maps a $(\Gamma,\ell)$-partition of $\sm$ onto a partition of $[0,\pi]$ by relatively open subintervals. This last partition can be ordered. Taking advantage of this fact, the following result was proved in \cite{cp,css}.

\begin{theorem} \label{thm:opsm}
Let $\Gamma:=O(n_1)\times O(n_2)$ with $n_1,n_2\geq 2$ and $n_1+n_2=m+1$ and let $M:=\sm$. Then, the optimal $(\Gamma,\ell)$-partition $\{\o_1,\ldots,\o_\ell\}\in\cP_\ell^\Gamma$ of $\sm$ given by Theorem \ref{thm:op} has the following properties: $\o_1,\ldots,\o_\ell$ are smooth and connected, $\overline{\o_1\cup\cdots\cup \o_\ell}=\sm$ and, after reordering,
\begin{itemize}
\item[•] $\o_1\cong\mathbb{S}^{n_1-1}\times \mathbb{B}^{n_2}$,\quad $\o_i\cong\mathbb{S}^{n_1-1}\times\mathbb{S}^{n_2-1}\times(0,1)$ if $i=2,\ldots,\ell-1$, \quad and \quad $\o_\ell\cong\mathbb{B}^n_1\times \mathbb{S}^{n_2-1}$,
\item[•] $\overline{\o}_i\cap \overline{\o}_{i+1}\cong\mathbb{S}^{n_1-1}\times\mathbb{S}^{n_2-1}$ \quad and\quad $\overline{\o}_i\cap \overline{\o}_j=\emptyset$\, if\, $|j-i|\geq 2$,
\item[•] the function
$$u:=\sum_{i=1}^\ell(-1)^{i-1}u_{\infty,i}$$
is a $\Gamma$-invariant sign-changing solution to the Yamabe problem \eqref{eq:y} on $\sm$ with precisely $\ell$ nodal domains, and $u$ has least energy among all such solutions.
\end{itemize}
\end{theorem}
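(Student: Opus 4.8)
\textbf{Proof plan for Theorem \ref{thm:opsm}.}

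The plan is to transport everything through the quotient map $q:\sm\to[0,\pi]$ of \eqref{eq:orbit_map} and to reduce the optimal partition problem to a one-dimensional partition problem on the interval $[0,\pi]$, for which the structure of minimizers is easy to analyze. First I would observe that, since $q$ identifies each $\Gamma$-orbit to a point, a $\Gamma$-invariant function $u$ on $\sm$ corresponds to a function $\tilde u$ on $[0,\pi]$, and the functional $J_\Omega$ restricted to $H^1_{g,0}(\Omega)^\Gamma$ becomes, after a change of variables, a weighted one-dimensional functional on the corresponding subinterval $I=q(\Omega)\subset[0,\pi]$ with a weight of the form $\rho(t)=c\,(\sin(t/2))^{n_1-1}(\cos(t/2))^{n_2-1}$ coming from the volume element of $\sm$ in the orbit variable (one must check the exact exponents against \eqref{eq:orbit_map}; the endpoints $t=0,\pi$ correspond to the exceptional orbits $\mathbb S^{n_1-1}$ and $\mathbb S^{n_2-1}$ where one factor of the weight vanishes). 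Under this correspondence, $c^\Gamma_\Omega$ depends only on $I=q(\Omega)$, and $c^\Gamma_\ell$ equals the infimum of $\sum_i c_{I_i}$ over partitions of $[0,\pi]$ into $\ell$ relatively open subintervals $I_1,\dots,I_\ell$.

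The next step is to apply Theorem \ref{thm:op}: it already gives an optimal $(\Gamma,\ell)$-partition $\{\Omega_1,\ldots,\Omega_\ell\}$ with each $\Omega_i$ connected, with $\overline{\Omega_1\cup\cdots\cup\Omega_\ell}=\sm$, and with the regularity of the interface stated in item $(iv)$. Pushing this partition down through $q$ yields a partition of $[0,\pi]$ into $\ell$ pairwise disjoint relatively open subintervals whose closures cover $[0,\pi]$, so after reordering we get $0=t_0<t_1<\cdots<t_\ell=\pi$ with $q(\Omega_i)=(t_{i-1},t_i)$, the two end intervals being half-open at the exceptional orbits. Translating the topological type of a saturated preimage $q^{-1}((a,b))$ back to $\sm$ gives exactly the product decompositions claimed: $q^{-1}((t_{i-1},t_i))\cong \mathbb S^{n_1-1}\times\mathbb S^{n_2-1}\times(0,1)$ for an interior interval, while $q^{-1}([0,t_1))$ collapses the $\mathbb S^{n_1-1}$-factor at $t=0$, hence is a disk bundle $\mathbb S^{n_2-1}\times\mathbb B^{n_1}$ (and symmetrically at $\pi$), giving the first bullet; the interfaces $\overline{\Omega}_i\cap\overline{\Omega}_{i+1}=q^{-1}(\{t_i\})\cong\mathbb S^{n_1-1}\times\mathbb S^{n_2-1}$ with non-adjacent closures disjoint, giving the second bullet. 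Smoothness of each $\Omega_i$ is automatic here because each interface is a single regular orbit $q^{-1}(\{t_i\})$, which is a smooth hypersurface of $\sm$; this is where the explicit group action does better than the generic regularity statement of Theorem \ref{thm:op}$(iv)$, since we can rule out the singular stratum $\mathscr S$ by dimension-counting on the orbit space.

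For the last bullet I would argue as in the $\ell=2$ case of Theorem \ref{thm:op}$(v)$, but iterated. Set $u:=\sum_{i=1}^\ell(-1)^{i-1}u_{\infty,i}$. By item $(i)$, $u_{\infty,i}|_{\Omega_i}$ is a least energy $\Gamma$-invariant solution of \eqref{eq:u} in $\Omega_i$, and by the $\cC^{0,1}$-regularity in item $(ii)$ together with item $(iv)$ the normal derivatives match in absolute value across each interface $q^{-1}(\{t_i\})$, so $u$ is a (weak, hence classical by elliptic regularity) $\Gamma$-invariant solution of \eqref{eq:y} with nodal set exactly $\bigcup_i q^{-1}(\{t_i\})$ and thus precisely $\ell$ nodal domains $\Omega_1,\dots,\Omega_\ell$ on which $u$ alternates sign. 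For the least-energy statement, let $\cE^\Gamma_{M,\ell}$ be the set of $\Gamma$-invariant solutions of \eqref{eq:y} with $\ell$ nodal domains; any $w\in\cE^\Gamma_{M,\ell}$ has nodal domains $\Theta_1,\ldots,\Theta_\ell$ with $w|_{\Theta_i}\in\cN^\Gamma_{\Theta_i}$ (the restriction to a nodal domain is a solution of the Dirichlet problem, hence lies on that Nehari manifold), so $J_M(w)=\frac1m\sum_i\|w|_{\Theta_i}\|_g^2\geq\sum_i c^\Gamma_{\Theta_i}\geq c^\Gamma_\ell=\frac1m\sum_i\|u_{\infty,i}\|_g^2=J_M(u)$, using that $\{\Theta_1,\ldots,\Theta_\ell\}\in\cP_\ell^\Gamma$ and that $\{\Omega_1,\ldots,\Omega_\ell\}$ is optimal. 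Here one should note that a $\Gamma$-invariant solution with $\ell$ nodal domains does give an element of $\cP_\ell^\Gamma$ only if each nodal domain is itself $\Gamma$-invariant, which holds because $\Gamma$ is connected and permutes the (open, connected) nodal domains, hence fixes each one. The main obstacle is not any single hard estimate but the bookkeeping of the orbit-space reduction: one must verify carefully that the one-dimensional weighted energy $c_I$ is monotone and strictly subadditive enough that the optimal partition of $[0,\pi]$ consists of genuine nondegenerate intervals (no interval collapsing to a point, which would drop the number of nodal domains), and that the behavior of the weight $\rho$ at the two endpoints is exactly the one forcing the disk-bundle topology rather than a product — this endpoint analysis, already carried out in \cite{cp,css}, is the technical heart of the argument.
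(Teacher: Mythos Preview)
The paper does not actually prove Theorem~\ref{thm:opsm}; it merely states it, having announced in the preceding sentence that ``the following result was proved in \cite{cp,css}'', so there is no in-paper argument to compare against. Your plan follows exactly the strategy the paper sets up in the paragraph before the theorem --- reduce via the orbit map $q$ of \eqref{eq:orbit_map} to an ordered partition of $[0,\pi]$, read off the topology of the saturated preimages, and combine the limit profiles across the interfaces using Theorem~\ref{thm:op}$(iv)$ --- and this is indeed the approach carried out in those references.

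Two small corrections to your sketch. First, $\Gamma=O(n_1)\times O(n_2)$ is \emph{not} connected, so your argument that each nodal domain of a $\Gamma$-invariant solution is itself $\Gamma$-invariant needs to be rephrased: the correct reason is that every $\Gamma$-\emph{orbit} is connected (a sphere or a product of spheres, since $n_1,n_2\geq 2$) and hence lies in a single connected component of $\{u\neq 0\}$; equivalently, $u$ factors through $q$, so its nodal domains are $q$-saturated and automatically $\Gamma$-invariant. Second, you have the endpoint topology reversed. At $t=0$ one has $\cos t=|x|^2-|y|^2=1$, so $|x|=1$ and $|y|=0$: it is the $\mathbb{S}^{n_2-1}$-factor that collapses, giving $q^{-1}([0,t_1))\cong\mathbb{S}^{n_1-1}\times\mathbb{B}^{n_2}$ as in the statement (and symmetrically at $t=\pi$). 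Your weight $\rho(t)$ has the same index swap; with $|x|=\cos(t/2)$ and $|y|=\sin(t/2)$ the orbit volume contributes $(\cos(t/2))^{n_1-1}(\sin(t/2))^{n_2-1}$. These are bookkeeping slips, not structural gaps; the overall plan is sound.
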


As shown by Theorems \ref{thm:op} and \ref{thm:opsm} the competitive Yamabe system \eqref{eq:s} gives rise to one or even multiple solutions to the Yamabe equation \eqref{eq:y}. The opposite question has been also considered in the literature. Let us call a solution $\mathbf{u} = (u_1,\ldots,u_\ell)$ to the Yamabe system \eqref{eq:rm} \emph{fully synchronized} if there exist $c_i\neq 0$ and a nontrivial solution $u$ to the single equation \eqref{eq:srm} such that $u_i=c_i u$ for all $i=1,\ldots,\ell$. It is readily seen that $(c_1u,\ldots,c_\ell u)$ solves \eqref{eq:rm} for any solution $u$ of \eqref{eq:srm} iff 
$\mathbf{c} =(c_1,\ldots,c_\ell)\in\r^\ell$ solves the algebraic system
\begin{equation}\label{ss1}
c_i=|c_i |^{2^*- 2}c_i+\sum_{\substack{j=1\\ j\not=1}}^\ell \lambda_{ij} |c_j|^\frac{2^*}{2}|c_i|^{\frac{2^*}{2}-2}c_i,\quad c_i>0,\quad\text{for every \ }i=1,\dots,\ell.
\end{equation}
There are several results concerning the solvability of \eqref{ss1}. The easiest case is when $m=4$ (i.e., $2^*=4$) and $\ell=2$. Indeed, a straightforward computation shows that a solution to \eqref{ss1} exists if and only if
 $\lambda_{12}>-1$ and $\lambda_{12}\not=1$. Bartsch proved in \cite[Proposition 2.1]{b} that, if $2^*=4$ and $\ell\ge2$, a fully synchronized solution to \eqref{ss1} exists when  $\lambda_{ij}:=\lambda\not=1$ for all $i\not=j$ and $\lambda>\overline\lambda$ for some $\overline\lambda<0 $ , while Chen and Zou \cite[Theorem 1.1]{cz2} showed that  if $m\ge5$ (i.e., $2^*<4$) and $\ell=2$ a fully synchronized solution to \eqref{ss1} always exists provided $\lambda_{12}>0$. Recently, Clapp and Pistoia complemented these results in \cite{cp1} showing that, if the system is purely cooperative (i.e., $\lambda_{ij}\geq 0$ for all $i,j=1,\ldots,\ell$, $i\neq j$), there exists a solution to \eqref{ss1}. We recall that in the purely cooperative case every positive solution of \eqref{eq:rm}  with $\ell=2$ is fully syncronized, as shown by Guo and Liu in \cite{gl}. On the other hand, it is shown in \cite{cp,cs} that there exists $\lambda^*<0$ such that the system \eqref{eq:rm} does not have a fully synchronized solution if $\lambda_{ij}<\lambda^*$ for all pairs $i\neq j$.

System \eqref{eq:rm} with mixed couplings (i.e., $\lambda_{ij}$ can be positive or negative) has been recently studied by Clapp and Pistoia \cite{cp1}. Let $\lambda_{ii}=1$ and assume  the matrix $(\lambda_{ij})$ is symmetric and admits a block decomposition as follows: For some $1 < q < \ell$ there exist \ $0=\ell_0<\ell_1<\dots<\ell_{q-1}<\ell_q=\ell$ \ such that, if we set
\begin{align*}
 &I_h:= \{i \in  \{1,\dots,\ell\}:  \ell_{h-1} < i \le \ell_h \},\\
 &\cI_h:=I_h\times I_h,\qquad \cK_h:=\big\{(i,j)\in I_h\times I_k: k\in\{1,\ldots,q\}\smallsetminus\{h\}\big\},
\end{align*}
then 
\begin{equation*}
\lambda_{ij}>0\ \text{ if }\ (i,j)\in \cI_h \quad \text{ and }\quad \lambda_{ij}\le0\ \text{if}\ (i,j)\in \cK_h,\quad h=1,\ldots,q.
\end{equation*}
According to the above decomposition, we shall write a solution $\bf u=(u_1,\ldots,u_\ell)$ to \eqref{eq:rm} in block-form as
$$\bf u=(\bar u_1,\ldots,\bar u_q)\qquad\text{with \ }\bar u_h=(u_{\ell_{h-1}+1},\ldots,u_{\ell_h}).$$
$\bf u$ is called \emph{fully nontrivial} if every component $u_i$ is different from zero.  In \cite{cp1} it is proved that if, either $m\geq 5$, or $m=4$ and $\lambda_{ij}=:b_h>1$ for all $i,j\in I_h$ with $i\neq j$, the system \eqref{eq:rm} has a fully nontrivial solution if $\max_{(i,j)\in\cK_h}|\lambda_{ij}|<\Lambda$ for some $\Lambda>0$. This solution is invariant under the conformal action on $\rm$ of the group $\Gamma$ defined above.

Finally, we would like to mention a couple of results, one of them by Grossi, Gladiali and Troestler \cite{ggt} where they  give sufficient conditions on the matrix $(\lambda_{ij})$ to ensure the existence of solutions bifurcating from the bubble of the critical Sobolev equation, and another one by Druet and Hebey \cite{dh} where they study the stability of solutions to \eqref{eq:rm} under linear perturbation.

\bigskip

\begin{flushleft}
\textbf{M\'onica Clapp}\\
Instituto de Matemáticas\\
Universidad Nacional Autónoma de México\\
Circuito Exterior, Ciudad Universitaria\\
04510 Coyoacán, Ciudad de México, Mexico\\
\texttt{monica.clapp@im.unam.mx} 
\medskip

\textbf{Angela Pistoia}\\
Dipartimento di Metodi e Modelli Matematici\\
La Sapienza Università di Roma\\
Via Antonio Scarpa 16 \\
00161 Roma, Italy\\
\texttt{angela.pistoia@uniroma1.it} 
\end{flushleft}

\end{document}